\newtheorem{theorem}{Theorem}[section]
\newtheorem{proposition}[theorem]{Proposition}
\newtheorem{corollary}[theorem]{Corollary}
\newtheorem{lemma}[theorem]{Lemma}
\newtheorem*{theorem*}{Theorem}
\newtheorem*{proposition*}{Proposition}
\newtheorem*{corollary*}{Corollary}
\newtheorem*{lemma*}{Lemma}
\newtheorem*{conjecture*}{Conjecture}
\theoremstyle{definition}
\newtheorem{definition}[theorem]{Definition}
\newtheorem*{definition*}{Definition}
\theoremstyle{remark}
\newtheorem{example}[theorem]{Example}
\newtheorem{remark}[theorem]{Remark}
\newtheorem{remarks}[theorem]{Remarks}
\newtheorem*{example*}{Example}
\newtheorem*{examples*}{Examples}
\newtheorem*{remark*}{Remark}
\newtheorem*{remarks*}{Remarks}
\newtheorem*{exercise*}{Exercise}
\newtheorem*{property*}{Property}
\newtheorem*{properties*}{Properties}
\newcommand\la{\leftarrow}
\newcommand\id{\mathrm{id}}
\newcommand\ten{\otimes}
\newcommand\CC{\mathrm{C}}
\newcommand\DD{\mathrm{D}}
\renewcommand\H{\mathrm{H}}
\newcommand\HH{\mathrm{HH}}
\newcommand\HC{\mathrm{HC}}
\newcommand\HP{\mathrm{HP}}
\newcommand\HN{\mathrm{HN}}
\newcommand\Z{\mathbb{Z}}
\newcommand\Q{\mathbb{Q}}
\newcommand\Cx{\mathbb{C}}
\newcommand\bH{\mathbb{H}}
\newcommand\bL{\mathbb{L}}
\newcommand\C{\mathcal{C}}
\newcommand\cD{\mathcal{D}}
\newcommand\cH{\mathcal{H}}
\newcommand\cL{\mathcal{L}}
\newcommand\cN{\mathcal{N}}
\newcommand\cO{\mathcal{O}}
\newcommand\cP{\mathcal{P}}
\newcommand\cW{\mathcal{W}}
\renewcommand\O{\mathscr{O}}
\newcommand\sA{\mathscr{A}}
\newcommand\sF{\mathscr{F}}
\newcommand\sI{\mathscr{I}}
\newcommand\sN{\mathscr{N}}
\newcommand\sO{\mathscr{O}}
\newcommand\fM{\mathfrak{M}}
\newcommand\fN{\mathfrak{N}}
\renewcommand\L{\Lambda}
\newcommand\CAlg{\mathrm{CAlg}}
\newcommand\Hom{\mathrm{Hom}}
\newcommand\Map{\mathrm{Map}}
\newcommand\HHom{\underline{\mathrm{Hom}}}
\newcommand\Ext{\mathrm{Ext}}
\newcommand\EExt{\mathbb{E}\mathrm{xt}}
\newcommand\cone{\mathrm{cone}}
\newcommand\cocone{\mathrm{cocone}}
\newcommand\per{\mathrm{per}}
\newcommand\im{\mathrm{Im\,}}
\newcommand\ch{\mathrm{ch}}
\newcommand\Td{\mathrm{Td}}
\newcommand{\AH}{\mathrm{AH}}
\newcommand\Spec{\mathrm{Spec}\,}
\newcommand\Set{\mathrm{Set}}
\newcommand\LLim{\varinjlim}
\newcommand\into{\hookrightarrow}
\newcommand\onto{\twoheadrightarrow}
\newcommand\xra{\xrightarrow}
\newcommand\xla{\xleftarrow}
\newcommand\pr{\mathrm{pr}}
\newcommand\alg{\mathrm{alg}}
\newcommand\bt{\bullet}
\newcommand\by{\times}
\newcommand\Perf{\mathrm{Perf}}
\newcommand\Symm{\mathrm{Symm}}
\newcommand\GL{\mathrm{GL}}
\newcommand\et{\acute{\mathrm{e}}\mathrm{t}}
\newcommand\an{\mathrm{an}}
\newcommand\Tot{\mathrm{Tot}\,}
\newcommand\tr{\mathrm{tr}}
\newcommand\ev{\mathrm{ev}}
\renewcommand\alg{\mathrm{alg}}
\newcommand\red{\mathrm{red}}
\newcommand\dR{\mathrm{dR}}
\newcommand\DR{\mathrm{DR}}
\newcommand\op{\mathrm{opp}}
\newcommand\co{\colon\thinspace}
\newcommand\oR{\mathbf{R}}
\newcommand\oL{\mathbf{L}}
\newcommand\uleft\underleftarrow
\newcommand\uline\underline
\newcommand\uright\underrightarrow
\begin{document}
\title{Semiregularity as a consequence of Goodwillie's theorem 
}

\author{J.P.Pridham}
\thanks{This work was supported by  the Engineering and Physical Sciences Research Council [grant number   EP/I004130/1].}

\begin{abstract}
  We realise Buchweitz and Flenner's semiregularity map (and hence \emph{a fortiori} Bloch's semiregularity map) for a smooth variety $X$ as the tangent of a generalised Abel--Jacobi map on the derived moduli stack of perfect complexes on $X$. The target of this map is an analogue of Deligne cohomology defined in terms of cyclic homology, and Goodwillie's theorem on nilpotent ideals ensures that it has the desired tangent space  (a truncated de Rham complex).

Immediate consequences are the semiregularity conjectures: that the semiregularity maps annihilate all obstructions, and that if $X$ is deformed, semiregularity measures the failure of the Chern character to remain a Hodge class. This gives rise to reduced obstruction theories of the type featuring in the study of reduced Gromov--Witten and Pandharipande--Thomas invariants. We also give generalisations allowing  $X$ to be singular, and even  a derived stack. 
\end{abstract}

\maketitle

\section*{Introduction}

In \cite{blochSemiregularity}, Bloch defined a semiregularity map
\[
 \tau \co \H^1(Z, \sN_{Z/X}) \to \H^{p+1}(X, \Omega^{p-1}_X)
\]
for every local complete intersection $Z$ of codimension $p$ in a smooth proper complex variety $X$, and  showed that curvilinear obstructions lie in the kernel of $\tau$. He also showed that if $X$ is deformed, then $\tau$ measures the curvilinear obstruction to $[Z]$ remaining a Hodge class, 
 and conjectured that these statements should hold for all obstructions, not just curvilinear ones.

In \cite{BuchweitzFlenner}, Buchweitz and Flenner extended $\tau$ to give maps
\begin{align*}
 \sigma_q \co \Ext^2_{\sO_X}(\sF,\sF) &\to \H^{q+2}(X, \Omega^q_X)
 \end{align*}
for any perfect complex $\sF$ on $X$, given (up to scalar) by composing with the $q$th power of the Atiyah class then taking the trace.
They then showed that curvilinear obstructions to deforming $\sF$ lie in the kernel of $\sigma_q$. If $X$ is  allowed to deform, they  showed the same holds provided $\ch_p(\sF)$ deforms as  a Hodge class, with  consequences for the variational Hodge conjecture \cite[\S 5]{BuchweitzFlenner}.  

Obstruction spaces feature in the  construction of  virtual fundamental classes in enumerative geometry, used to construct Gromov--Witten and similar invariants. Often the natural obstruction spaces are too large, killing the na\"ively defined invariants, but the semiregularity conjectures yield smaller reduced obstruction theories $\ker(\tau)$ and $\ker(\sigma_q)$ in many cases, paving the way for non-trivial reduced invariants to be defined. 

Buchweitz and Flenner \cite[\S 1, p.~138]{BuchweitzFlenner} also conjectured that the semiregularity map should arise as a morphism of obstruction spaces associated  to a
morphism of deformation theories, then speculated that this morphism would most likely take the form of a 
generalised Abel--Jacobi map from the deformation groupoid to an intermediate Jacobian or to Deligne cohomology. 
The underlying idea is that for a deformation $\tilde{\sF}$ of $\sF$, we must have $\ch(\sF)=\ch(\tilde{\sF})$ because the cohomological Chern character takes rational values. The homotopy between cycles representing $\ch_{q+1}(\sF)$ and $\ch_{q+1}(\tilde{\sF})$  should then be given by  $\sigma_q(\tilde{\sF})$. 

When seeking functorial obstruction theories, one is naturally drawn to derived deformation theory, which generates obstruction spaces as higher tangent spaces (e.g.\ see Lemma \ref{obs}), and guarantees functoriality of obstruction maps. In \cite{manlie,IaconoManettiSemireg},  Manetti and Iacono  constructed explicit infinitesimal derived Abel--Jacobi maps by $L_{\infty}$ methods, proving Bloch's first semiregularity conjecture in the case where $Z$ is smooth, and then for complete intersections of hypersurfaces.\footnote{Nearly a decade after this was first written, Bandiera, Lepri and Manetti \cite{BandieraLepriManettiSemireg} used Chern--Simons classes to recover the main results of this paper  in the absolute case $R=\Cx$ for smooth proper varieties $X$, including the first semiregularity conjecture.} 
Like \cite{BuchweitzFlenner},  \cite{IaconoManettiSemireg} identified $\bH^{2p}(X, \Omega_X^{< p})$ as a more natural target for the semiregularity map than  $\H^{p+1}(X, \Omega^{p-1}_X)$. Other 
work such as \cite{STV, KoolThomas1}   focuses on the case $p=1$, where this discrepancy does not arise.\footnote{In that special case, the derived Picard stack already provides a suitable target (since the trace $\sigma_0$ is an isomorphism for line bundles), with the Abel--Jacobi map simply corresponding to the derived determinant. 
}

In this paper, we construct a morphism of derived deformation theories of the form envisaged in \cite{BuchweitzFlenner}, but to a slightly different target. This leads to the following theorem
, which proves and generalises the  conjectures of \cite{BuchweitzFlenner} and hence \cite{blochSemiregularity}, showing that the semiregularity map measures the failure of the unique horizontal lift of the Chern character to remain in $F^p$: 
\begin{theorem*}  
 Take a local Artinian $\Cx$-algebra $A$, a smooth  morphism $X \to \Spec A$ of Artin stacks and square-zero ideal $I \subset A$ with quotient $B= A/I$. Then for  any perfect complex $\sF$ over  $X':=X\ten_AB$, with obstruction $o(\sF) \in \EExt^{2}_{\sO_{X'}}(\sF,\sF\ten_B I)$  to deforming $\sF$ to a complex of $\sO_X$-modules, the 
 image of the 
 Chern character $\ch_p(\sF)$ under the map
\[
 \H^{2p}(X'(\Cx)_{\an},\Q) \cong \H^{2p}(X(\Cx)_{\an},\Q) \to \H^{2p}(X(\Cx)_{\an},A) \cong \H^{2p}(X,\Omega^{\bt}_{X/A}) 
 \]
lies in $F^p\H^{2p}(X,\Omega^{\bt}_{X/A})$
 if and only if $o(\sF)$ maps to zero under the composite map
 \[
  \EExt^{2}_{\sO_{X'}}(\sF,\sF\ten_B I) \xra{\sigma_{p-1} } \H^{p+1}(X,I\Omega^{p-1}_{X/A}) \to \bH^{2p}(X,\Omega^{<p}_{X/A}).
 \]
\end{theorem*}
In fact, we establish 
a more  general statement, Corollary \ref{horizobscor2}, using derived differential forms to remove the smoothness hypothesis, and allowing derived objects. Note that if the family $X$ is constant over $A$ (i.e.\ $X\cong X_0 \by \Spec A$)
then the condition $\ch_p(\sF) \in F^p$ is automatically satisfied and the obstruction $o(\sF)$ always maps to zero (Remark \ref{splitrmk}).

The theorem produces reduced obstruction theories for the stable pairs and stable curves featuring in  the study of Pandharipande--Thomas and Gromov--Witten  invariants (Remarks \ref{horproperrk}  and Remark \ref{GWrk}). The latter follows 
by considering perfect complexes of the form $\oR f_*\O_Z$ to produce reduced obstruction theories  for proper  morphisms $f\co Z\to X$. As a special case,  this establishes Bloch's   semiregularity conjectures in the generality envisaged (Remark \ref{Blochrk}). Since we can apply the theorem to gerbes, it also leads to reduced obstructions for $\mu$-twisted sheaves (Remark \ref{mutwistrmk}).

The previous theorem is a consequence of the following more general result, which only involves cohomology groups of algebraic, not analytic or topological, origin (cf. Corollary \ref{gensemiregcor} for the derived generalisation not requiring smoothness):  
\begin{theorem*} 
Take a smooth morphism $f \co X \to S$ of Artin stacks 
 over $\Q$, with  a  closed immersion $S' \into S$ defined by a nilpotent ideal $\sI$. 
Then for $X':=X\by_{S}S'$, the  Chern character refines to give maps $\Xi_p$ from  $K_0(X')$ to the vector spaces 
\begin{align*}
&\bH^{2p}(X, \Omega^{\bt}_{X/S}\by_{\Omega^{\bt}_{X'/S'}}F^p\Omega^{\bt}_{X'/S'}) \\
 &= \bH^{2p}(X, \sI\sO_{X} \xra{d} \sI\Omega^1_{X/S} \xra{d} \ldots \xra{d} \sI\Omega^{p-1}_{X/S} \xra{d} \Omega^p_{X/S}     \xra{d} \Omega^{p+1}_{X/S}  \xra{d}\ldots ).
\end{align*}

If $\sI^2=0$, then for  any perfect complex $\sF$ over  $X'$  the 
obstruction to lifting $\Xi_p(\sF)$ to $\bH^{2p}(X, F^p\Omega^{\bt}_{X/S})$ 
 is given by applying the composite  map 
 \[
  \EExt^{2}_{\sO_{X'}}(\sF,\sF\ten_{\sO_{S'}} \sI) \xra{\sigma_{p-1} } \bH^{p+1}(X,\sI\Omega^{p-1}_{X/S}) \to \bH^{2p}(X,\sI\Omega^{<p}_{X/S}) 
 \]
 to  the obstruction $o(\sF)$  to deforming $\sF$ to a complex of $\sO_X$-modules. 
\end{theorem*}

The crucial  observation enabling our construction $\Xi$  is that if we modify  Deligne cohomology slightly, replacing 
rational Betti cohomology with any other   cohomology theory which is formally \'etale (i.e.\ invariant under nilpotent thickenings), 
then  the obstruction spaces  are unchanged. The theory we work with is  Hartshorne's algebraic de Rham cohomology $\DR^{\alg}$ in the guise of derived de Rham cohomology, which is formally \'etale by Goodwillie's theorem on nilpotent ideals \cite
{goodwillieHCderivations}. Our map $\Xi$ is then induced from the Goodwillie--Jones Chern character $\ch^-$.

Given a smooth morphism $X \to \Spec R$ over $\Cx$,  we  set $J^p(X/R,\Cx)[2p]$  to be the  cocone (i.e.\ shifted cone or homotopy fibre) of 
\[
 \DR^{\alg}(X/\Cx) \to \oR \Gamma(X,  \Omega^{< p}_{X/R}),   
\]
noting that in general $X$ will not be smooth over $\Cx$. This definition also adapts in the obvious way to any  base $\Q$-algebra $k$ in place of $\Cx$, and admits  further generalisations to derived stacks with no smoothness hypothesis  (Definition \ref{JXpglobaldef}). 

To establish existence of the Abel--Jacobi map and functoriality, we reformulate   in terms of cyclic homology. Derived de Rham cohomology is isomorphic  to periodic cyclic homology $\HP$, giving
\[
\prod_p J^p(X/R,k) \simeq \HP^k(X)\by^h_{\HP^R(X)}\HN^R(X),
\]
for negative cyclic homology $\HN$.
The generalised Abel--Jacobi maps 
\[
\Xi_p \co K(X) \to J^p(X/R,k)
\]
 are then induced from the Goodwillie--Jones Chern character
$
 \ch^-\co K(X) \to \HN^k(X). 
$
 
Now, 
$\Xi_p$ restricts from $K$-theory to a map on the $\infty$-groupoid $\Perf(X)$ of perfect complexes on $X$. As we change the base, setting $X_A:= X\ten_RA$ and $J_X^p(A,k):= J^p(X_A /A,k)$, this  gives us  morphisms
\[
\Xi_p \co  \Perf( X_A) \to J^p_X(A,k),
\]
functorial in 
simplicial $R$-algebras $A$.

Goodwillie's Theorem on nilpotent ideals implies that $\HP^k(X_A)$ is formally \'etale as a functor in $A$, so $J_X^p(-,k)$ 
has  
the same derived tangent space as  cyclic homology with a degree shift.  
On derived tangent spaces,  $\Xi_p$ thus  induces  maps
\[
 \xi_p\co \EExt^r_{\sO_{X_A}}(\sF,\sF\ten_AM) \to \bH^{2p-2+r}(X, (\O_X \to \ldots \to \Omega^{p-1}_X)\ten_R M),
\]
for $A$-modules $M$;  in Proposition \ref{AJtgtprop}, we show that $\xi_p$ is just  the $(p-1)$th component of the  Lefschetz map $\cL$ of \cite{BresslerNestTsygan}, and hence (Remark \ref{cfBF}) equivalent to the semiregularity map $\sigma_{p-1}$ of \cite{BuchweitzFlenner}.

The \'etale 
hypersheaves $\Perf_X$ and $J_X^p(-,k)$ satisfy homotopy-homogeneity\footnote{Later terms for this concept are  infinitesimal cohesiveness on one factor and (on Artinian input)  being a formal moduli problem, although the property neither associates genuine moduli problems to such functors nor is automatically satisfied by such.
},
a left-exactness property analogous to Schlessinger's conditions, which in particular gives a functorial identification of higher tangent spaces with obstruction spaces. Since $J_X^p(-,k)$ has the same obstruction space as Deligne cohomology, 
the map $\Xi_p$ thus fully realises the hope expressed in \cite[\S 1]{BuchweitzFlenner}.

Given a square-zero extension $e\co A \to B$ of simplicial algebras with kernel $I$, and a perfect complex $\sF$ on $X_B$, the obstruction $o_e(\sF)$ to lifting $\sF$ to $X_A$ lies in $\Ext^2_{\sO_{X_B}}(\sF, \sF\ten_B I)$. 
Derived functoriality and homotopy-homogeneity then ensure (Corollary \ref{AJtgtcor2}) that  the obstruction to lifting 
$\Xi_p(\sF)$ from $\H_0J_X^p(B,k)$ to $\H_0J_X^p(A,k)$ is just 
\[
 \xi_{p}(o_e(\sF)) \in \bH^{2p}(X, (\sO_{X} \to \Omega_{X/R}^1\to \ldots \to \Omega^{p-1}_{X/R})\ten_RI),
\]
 leading to the theorems above (Corollaries \ref{horizobscor2} and \ref{gensemiregcor}), corresponding to the choices $k=\Cx$ and $k=R=A$, respectively. 
Our formulation in terms of cyclic homology and the Lefschetz map $\cL$ also extends these results to certain non-commutative spaces (Corollary \ref{orlovcor}), where one effectively has to consider all values of $p$ simultaneously.

%
\medskip
%

I am indebted to  Timo Sch\"urg for
bringing \cite{BuchweitzFlenner} to my attention. I would also like to thank Barbara Fantechi, Richard Thomas and Daniel Huybrechts for helpful comments, and the anonymous referee for diligently suggesting improvements.

\tableofcontents

\subsection*{Notation and conventions}


The Dold--Kan correspondence gives an equivalence of categories between
simplicial abelian groups and non-negatively graded chain complexes, with homotopy groups corresponding to homology groups, and we will pass between these categories without further comment. 

Given a chain complex $V$, we will write $V[n]$ for the chain complex  given by $V[n]_i= V_{n+i}$; beware that this is effectively opposite to the standard convention for cochain complexes. 
We also write $\tau_{\ge 0}V$ for the good truncation \cite[Truncations 1.2.7]{W} of $V$ in non-negative chain degrees, which we can then regard as a simplicial abelian group by the Dold--Kan correspondence above.

For a morphism $f\co V \to W$ of chain complexes, $\cocone(f)$ will denote the shifted cone, a model for the homotopy fibre of $f$, which fits into an exact triangle
\[
 \cocone(f) \to V \to W \to \cocone(f)[-1].
\]

\begin{definition}\label{HCdef}
Given a commutative ring $A$ and a flat $A$-algebra $E$, write $\HC^A(E)$ (resp. $\HN^A(E)$, resp. $\HP^A(E)$, resp. $\HH^A(E)$) for the chain complex associated to cyclic (resp. negative cyclic, resp. periodic cyclic, resp. Hochschild) homology of $E$ over $A$, as in \cite[\S 9.6]{W}. 

Given a simplicial commutative ring $A$ and a  simplicial $A$-algebra $E$ with each $E_n$ flat over $A_n$,
together with  a homology theory $\cH$ as in the previous paragraph,  define the complex $\cH^A(E)$  by first forming the simplicial chain complex given by $\cH^{A_n}(E_n)$ in level $n$, then taking the product total complex.  
\end{definition}

\begin{remark}
When working with cyclic homology, it is usual to fix a base ring and to omit it from the notation. Since varying the base  will be  crucial to our constructions, we have introduced the superscript $A$ above. Also beware that the cohomology theories $\HN$ and $\HP$ are frequently denoted by $\HC^-$ and $\HC^{\mathrm{per}}$ in the literature, and that our complexes are related to cyclic homology groups by
\[
 \HC^A_i(E,M):= \H_i\HC^A(E,M)
\]
etc.  In the notation of \cite[Ch. 9]{W}, the complexes $\HH, \HC, \HN, \HP$ are denoted by $\CC\CC_*^h, \Tot \CC\CC_{**},  \Tot^{\Pi} \CC\CC_{**}^N, \Tot^{\Pi} \CC\CC_{**}^P$.

 When $A$ is a discrete ring, note that the complexes above are those studied in \cite{goodwillieChern}.    
\end{remark}

\begin{definition}
 As in \cite[\S 9.8.2]{W}, when the $A$-algebra $E$ is commutative each of the homology theories $\cH$ above admits a Hodge decomposition, which we denote by
\[
 \cH^A(E)=\prod_{p \in \Z} \cH^A(E)^{(p)}.
\]
\end{definition}
Note that in the case of $\HC$, we have $\HC^A(E)^{(p)}=0$ for $p<0$, with degree bounds on the other terms (specifically  $\HC^A(E)^{(p)}$ concentrated in chain degrees $\ge p$) 
making the infinite product a direct sum in that case only.


For $E$ commutative, recall from \cite[\S 9.6.1]{W} that there are exact triangles (the  $\mathsf{SBI}$ sequences) 
\begin{align*}
 \HN^A(E)^{(p)} \xra{\mathsf{I}} \HP^A(E)^{(p)} \xra{\mathsf{S}} \HC^A(E)^{(p-1)}[-2] \xra{\mathsf{B}}  \HN^A(E)^{(p)}[-1]\phantom{,}\\
\HH^A(E)^{(p)} \xra{\mathsf{I}} \HC^A(E)^{(p)} \xra{\mathsf{S}} \HC^A(E)^{(p-1)}[-2] \xra{\mathsf{B}}  \HH^A(E)^{(p)}[-1],
\end{align*}
compatible with the projection map $\pi_{\HH} \co \HN^A(E)^{(p)}\to \HH^A(E)^{(p)}$ and $\mathsf{S}\co \HP^A(E)^{(p)} \to \HC^A(E)^{(p)}$. For $E$ non-commutative, these sequences still exist once we drop the superscripts $(p)$, there being no Hodge decomposition in this case. See Proposition \ref{cfDRHC} below for the relation of these sequences with the Hodge filtration on derived de Rham cohomology.

\begin{definition}
Given a simplicial ring $E$, we follow \cite{waldhausen} in writing $K(E)$ for the simplicial set constituting the $K$-theory space of $E$ (the $0$th part of the $K$-theory spectrum). This is an infinite loop space with $\pi_iK(E)= K_i(E)$. 
\end{definition}

\section{The Abel--Jacobi map for rings}\label{AJrings}

Fix 
a simplicial commutative $\Q$-algebra $R$ and a simplicial associative  $R$-algebra $O(X)$,  which need not be commutative. 
Assume that 
each $O(X)_n$ is flat as an $R_n$-module.
 We will write $F(X):= F(O(X))$ when  $F$ is a functor such as $K, \HP,\HC,\HN,\HH$.

 Write $s\CAlg_R$ for the category of simplicial commutative $R$-algebras. We use its model structure induced from the Kan--Quillen model structure on simplicial sets. We call a functor from $s\CAlg_R$ to a model category homotopy-preserving if it preserves weak equivalences. 
 

\subsection{The Abel--Jacobi map}

\begin{definition}\label{JXdef}
Define  chain complexes 
\begin{align*}
 J(X/R):= \cocone( \HP^{\Q}(X) \xra{\mathsf{S}}  \HC^R(X)[-2]),\\
 J^p(X/R):= \cocone( \HP^{\Q}(X)^{(p)} \xra{\mathsf{S}}  \HC^R(X)^{(p-1)}[-2]),
 \end{align*}
the latter only being defined when $O(X)$ is commutative, with $J(X/R)=\prod_{p \in \Z} J^p(X/R) $. 

These are functors on the category of arrows $(R \to O(X))$ in $s\CAlg_{\Q}$, so  
we may then define  functors $J_X$ and $J_X^p$ from $s\CAlg_R$ to chain complexes 
by  setting $J_X(A):= J(X_A/A)$ and $J_X^p(A):= J^p(X_A/A)$, 
where $O(X_A):=O(X)\ten_RA$. 
\end{definition}
%
Note that the equivalence $\cocone(\HP^R(X) \xra{\mathsf{S}}  \HC^R(X)[-2])\simeq \HN^R(X)$ gives a  homotopy fibre product characterisation 
\[
J(X/R) \simeq \HP^{\Q}(X)\by^h_{\HP^R(X)}\HN^R(X);
\]
the motivation for this construction is that $\HN^R(X)$ behaves like the Hodge filtration over $R$, while $\HP^{\Q}(X)$ behaves in some respects like  Betti cohomology.

The Goodwillie--Jones Chern character
\[
 \ch^-\co K(X) \to \tau_{\ge 0}\HN^{\Q}(X) 
\]
of \cite[Theorem II.3.1]{goodwillieChern} (there denoted $\alpha$) or \cite[\S 5]{HoodJones}, or rather its promotion to a natural  $\infty$-transformation  as in \cite[\S III.2.3]{goodwillieChern}, 
then combines with the  
 map 
\[
\HN^{\Q}(X)  \to  \HP^{\Q}(X)\by^h_{\HP^R(X)}\HN^R(X),
\]
natural in $X$ and $R$,
to give a natural map
\[
 \Xi\co  K(X )\to  \tau_{\ge 0}J(X/R)
\]
in the $\infty$-category of simplicial sets,\footnote{Note that  we are here following our stated convention of reinterpreting chain complexes  as simplicial abelian groups. The natural $\infty$-transformation is given by a  zigzag alternating natural transformations with natural weak equivalences, which is thus a natural transformation of the associated simplicial functor on hammock localisations.} 
which we call the (generalised) Abel--Jacobi map.

\begin{definition}\label{perfdef} 
 Given a simplicial ring $S$, define $\Perf(S)$ to be the simplicial set  given by first forming the core (i.e.\ the subcategory of quasi-isomorphisms)    of the simplicial category $\cP\!\textit{erf}(S)$ of perfect $S$-modules in complexes, then taking the nerve; see for instance \cite[Definitions 2.8 and 2.29]{dmsch} or \cite[\S 1.3.7]{hag2}. This becomes a simplicial semiring with addition given by block sum and multiplication by tensor product. 
\end{definition}

\begin{definition}\label{AJdef}
By \cite[Theorem 2.3.2]{waldhausen} and \cite[Theorem 1.9.8]{ThomasonTrobaugh},  there is a natural map $\Perf(X) \to K(X)$. 
Composing this with the Abel--Jacobi map above  gives us a map 
\begin{align*}
 \Xi \co \Perf(X) &\to \tau_{\ge 0}J(X/R). 
 \end{align*}
 \end{definition}

\subsection{Homogeneity and obstructions}\label{hgsobsn}

Say that a map $A \to B$ in $s\CAlg_R$ is a nilpotent extension if it is levelwise surjective, with the kernel $I$  satisfying $I^n=0$ for some $n$, where both the kernel and its powers are defined levelwise. In other words, the maps $A_i \to B_i$ are all nilpotent surjections, with a common bound on the index of nilpotency.

\begin{definition}\label{hgsdef}
 We say that a homotopy-preserving functor  $F$ from  $s\CAlg_R$ to a model category $\C$ is homotopy-homogeneous if      for $A \to B$ a nilpotent extension in $s\CAlg_R$ and $C \to B$ any morphism,
the map
\[
 F(A\by_BC) \to F(A)\by^h_{F(B)}F(C)       
\]
(to the homotopy fibre product) is a weak equivalence. 
When $\C$ is the category of chain complexes, this is equivalent to saying that we have an exact triangle 
\[
 F(B)[1] \to  F(A\by_BC) \to F(A) \oplus F(C) \to F(B).      
\]
\end{definition}

\begin{definition}
 Define the simplicial set-valued functor $\Perf_X$ on $s\CAlg_R$ by $\Perf_X(A) := \Perf(O(X)\ten_R A)$.
\end{definition}

\begin{definition}\label{Tdef} 
 Given a homotopy-homogeneous functor $F$ from $s\CAlg_R$ to simplicial sets (with the Kan--Quillen model structure),
 an element $x \in F(A)_0$ and an $A$-module $M$ in simplicial abelian groups, define the tangent space
\[
T_x(F,M)        
\]
to be the homotopy fibre of $F(A\oplus M) \to F(A)$ over $x$, for the trivial square-zero extension $A \oplus M \to A$.     

Note that homotopy-homogeneity of $F$ ensures that $T_x(F,M)$ has an infinite loop space structure; see for instance  \cite[Lemma \ref{drep-adf}]{drep}. 
We thus 
define tangent cohomology to be the abelian groups $\DD^{n-i}_x(F,M):= \pi_iT_x(F,M[-n])$, which are well-defined by that lemma.
\end{definition}

\begin{definition}
 Define a square-zero extension $I \to A \to B$  in $s\CAlg_R $ to be a levelwise surjection $A \to B$ in $s\CAlg_R $ with kernel $I$ being a square-zero simplicial ideal.
\end{definition}

\begin{lemma}\label{obs}
Take a homotopy-preserving and homotopy-homogeneous simplicial  set-valued functor $F$ on $s\CAlg_R$ and    
  a square-zero extension $I \to A \xra{e} B $ in   $ s\CAlg_R $. Then there is a naturally  associated section $o_e \co FB \to F(B\oplus I[-1])$ in the $\infty$-category of simplicial sets, 
  such that for
 any $x \in (FB)_0$, the homotopy fibre $(FA)_x$ of $F(e)$ over $x$ is naturally homotopic to the space $\{o_e(x)\}\by^h_{T_x(F,I[-1])}\{0\}$ of paths from $0$ to $o_e(x)$.

In particular, we have a functorial obstruction
\[
 o_e(x) \in \DD^1_x(F,I),
\]
which is zero if and only if $[x]$ lies in the image of 
\[
 e_*\co\pi_0(FA)\to\pi_0(FB).
\]
\end{lemma}
\begin{proof}
 This is contained in \cite[Lemma \ref{drep-obs}]{drep} and its proof, in which the obstruction maps given here are just one term in a long exact sequence of homotopy groups. 
 
 Explicitly, the \v Cech nerve of $A$ over $B$ gives a bisimplicial $R$-algebra  by 
 sending $n$ to the $(n+1)$-fold fibre product of $A$ over $B$, 
 and taking the  diagonal gives us a simplicial $R$-algebra $\tilde{B}$. The natural map $\tilde{B} \to B$ is a square-zero extension with acyclic kernel $J$, where $J_n \cong (I_n)^{n+1}$; acyclicity follows because $J$ is the diagonal of a levelwise acyclic bisimplicial module. 
 The diagonal embedding gives a natural map $I \into J$, with the square-zero property ensuring that $I \subset \tilde{B}$ is a simplicial ideal.
We then have  an isomorphism 
 $\tilde{B}/I \cong B \oplus (J/I)$, and since $J$ is acyclic the  $B$-module $J/I$ is  naturally weakly equivalent to $I[-1]$. The obstruction map $o_e$ then comes from the zigzag of simplicial sets
 \[
  F(B) \xla{\sim} F(\tilde{B}) \to F(B \oplus (J/I)) \xra{\sim} F(B\oplus I[-1]).
 \]

Moreover, for the zero section $B \to B \oplus J/I$, we have $\tilde{B}\by_{B\oplus (J/I)}B \cong A$ and hence $F(A)\simeq F(B)\by^h_{o_e, F( B\oplus I[-1]), 0} F(B)$, giving  the required fibre sequence on taking homotopy fibres over $x \in F(B)$.
 \end{proof}

The following is well-known (see for instance  \cite[Theorem \ref{dmsch-representdmod}]{dmsch};  although stated for $O(X)$  commutative, the proof works verbatim in our generality):
\begin{lemma}
The functor $\Perf_X$ is homotopy-preserving and homotopy-homogeneous.
At $\sF \in \Perf_X(A)$,  the tangent space $T_{\sF}( \Perf_X,M)$ is $\tau_{\ge 0} (\oR \HHom_{O(X)\ten_RA}(\sF,\sF\ten_A M)[-1])$, so the tangent cohomology groups are
\[
\DD^i_{\sF}(\Perf_X,M) \cong \EExt^{i+1}_{O(X)\ten_RA}(\sF,\sF\ten_A M). 
\]
\end{lemma}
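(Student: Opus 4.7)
The plan is to verify the three assertions in turn, using the cited commutative case as a springboard.

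For homotopy-preservation, a weak equivalence $A \to A'$ in $s\CAlg_R$ yields a levelwise weak equivalence $X\ten_R A \to X\ten_R A'$ of simplicial rings, because by hypothesis each $X_n$ is $R_n$-flat, so $X_n\ten_{R_n}(-)$ preserves weak equivalences of simplicial $R_n$-modules. Since the nerve of the core of perfect modules depends only on the derived $\infty$-category of modules, it sends such levelwise equivalences to weak equivalences of simplicial sets.

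For homotopy-homogeneity, take a nilpotent extension $A \to B$ and any $C \to B$ in $s\CAlg_R$. Flatness of $X$ over $R$ ensures that the Cartesian diagram of base rings is preserved, so $X_{A\by_B C} \simeq X_A \by_{X_B} X_C$ as simplicial rings. The claim then reduces to descent of perfect modules along this pullback square: a compatible triple $(\sE_A, \sE_C, \phi\co \sE_A\ten_A B \simeq \sE_C\ten_C B)$ should glue via homotopy pullback to a perfect module on $X_{A\by_B C}$, and this assignment should be an equivalence of $\infty$-categories. The commutative case is exactly \cite[Theorem \ref{dmsch-representdmod}]{dmsch}, cited in the statement; since descent uses only the commutativity of the base rings $A,B,C$ together with flatness of $X$, the argument extends verbatim to the possibly non-commutative simplicial $R$-algebra~$X$.

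For the tangent space, I would specialise to the split square-zero extension $A \oplus M \to A$, so that $X_{A\oplus M} \simeq X_A \oplus (X_A\ten_A M)$ as a square-zero extension of $X_A$. Deformations of a perfect $\sE$ over $X_A$ along this extension are controlled by the Maurer--Cartan theory of the DG algebra underlying $\oR\HHom_{X_A}(\sE,\sE\ten_A M)$, and the nerve of the core of the resulting deformation groupoid is computed by the standard dictionary: $\pi_0$ records deformation classes (hence $\EExt^1$), $\pi_1$ records automorphisms of the trivial deformation (hence $\EExt^0$), and higher $\pi_i$ record higher homotopies in the core (hence $\EExt^{1-i}$). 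These match exactly the homotopy groups of $\tau_{\ge 0}(\oR\HHom_{X_A}(\sE,\sE\ten_A M)[-1])$. The tangent cohomology identification $\DD^i_\sE(\Perf_X,M) \cong \EExt^{i+1}_X(\sE, \sE\ten M)$ then follows formally from compatibility of $\oR\HHom$ with shifts of the second variable, together with the definition of $\DD$ via $T_\sE(\Perf_X, M[n])$.

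The main obstacle is the homotopy-homogeneity step: although the commutative case is available in the literature, one must track carefully that the flatness hypothesis on $X$ over $R$ suffices to keep the relevant tensor products underived and compatible with the Cartesian pullback, and that the descent argument for perfect modules continues to work when $X$ is non-commutative. All other steps reduce to formal deformation-theoretic bookkeeping once this is in hand.
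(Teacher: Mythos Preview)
The paper does not actually give a proof of this lemma: it simply labels the statement as ``well-known'' and points to the cited reference for the commutative case. Your proposal is therefore strictly more detailed than what appears in the paper, and the sketch you give is correct and follows the expected line of argument.

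Your three steps --- homotopy-preservation from levelwise flatness of $X$ over $R$, homotopy-homogeneity from descent of perfect modules along a square-zero pullback of (associative) simplicial rings, and the tangent identification via the standard deformation theory of perfect complexes over a trivial square-zero extension --- are exactly the ingredients one would extract from the cited source. The one place to be slightly more careful is your remark that the descent argument ``extends verbatim'' to non-commutative $X$: this is true, but the reason is not that $A,B,C$ are commutative; it is that module gluing along a nilpotent surjection of associative simplicial rings is a general fact (the homotopy fibre product $X_A\by_{X_B}X_C$ is again a simplicial ring with $X_A\by_{X_B}X_C \to X_C$ nilpotent, and perfect modules satisfy derived nilpotent descent in this generality). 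Once that is stated cleanly, your argument is complete.
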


\subsection{Goodwillie's theorem}

The following is \cite[Lemma I.3.3]{goodwillieChern}, a reformulation of Goodwillie's theorem on nilpotent ideals (\cite[Theorems II.5.1 and IV.2.6]{goodwillieHCderivations}):
\begin{theorem}\label{etalekey}
 If $S \to T$ is a map of simplicial $\Q$-algebras such that $\pi_0S \to \pi_0T$ is a nilpotent extension, then the map
\[
 \HP^{\Q}(S)\to \HP^{\Q}(T)       
\]
 is a quasi-isomorphism of chain complexes.     
\end{theorem}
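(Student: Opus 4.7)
The plan is to reduce the simplicial statement to Goodwillie's original discrete theorem cited at the end, namely that for any surjection $R \onto R/I$ of discrete $k$-algebras with $I^n=0$, the induced map $\HP^k(R)\to \HP^k(R/I)$ is a quasi-isomorphism. The content of our statement is that this discrete result propagates through the simplicial totalisation recalled in Definition \ref{HCdef}.

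First I would verify that $\HP^k$, viewed as a functor on simplicial $k$-algebras via the product total complex, is a homotopy functor: it sends levelwise weak equivalences between levelwise flat simplicial $k$-algebras to quasi-isomorphisms. This follows because $\HP^k$ in each simplicial level is computed by the Hochschild-based $(b,B)$-bicomplex, which preserves quasi-isomorphisms of flat $k$-algebras, combined with convergence of the product total spectral sequence (degrees in each simplicial line are bounded below by the flatness hypothesis and the Hodge decomposition). Using this, I would replace $S$ and $T$ by cofibrant (hence degreewise polynomial, in particular flat) simplicial $k$-algebras, and by factoring through the mapping cylinder I may assume that $S \to T$ is itself a levelwise surjection.

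Second, I would factor $S \to T$ through two intermediate steps: the Postnikov tower of $S$ down to $\pi_0 S$, an analogous tower for $T$, and the discrete nilpotent extension $\pi_0 S \to \pi_0 T$ at the bottom. Each successive stage of the Postnikov tower $P_m \to P_{m-1}$ is, on each simplicial level, a square-zero extension by the appropriate Eilenberg--MacLane piece (concentrated in a single simplicial degree), hence in particular a nilpotent extension in every degree. Applying the discrete Goodwillie theorem levelwise to each square-zero stage shows that the induced map on the bicomplex is a levelwise quasi-isomorphism, and the first paragraph then gives a quasi-isomorphism on $\HP^k$. The bottom map $\pi_0 S \to \pi_0 T$ is discrete and nilpotent by hypothesis, so Goodwillie's theorem applies directly. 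Composing these quasi-isomorphisms yields the claim.

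The main obstacle is the interchange of homotopy limits needed to pass from the levelwise quasi-isomorphisms provided by the discrete Goodwillie theorem to a quasi-isomorphism on the product totalisation; one needs uniform control on the connectivity of the relative $\HP^k$ at each simplicial level so that the product total spectral sequence converges strongly. Here the Hodge decomposition $\HP^k = \prod_p \HP^{k,(p)}$ is useful, as each weight piece has bounded homological amplitude in each simplicial degree, reducing the argument to an honest spectral sequence comparison.
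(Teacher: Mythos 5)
The paper does not supply its own proof here: it simply identifies the statement with Goodwillie's Lemma I.3.3 of \emph{Relative algebraic $K$-theory and cyclic homology} (the paper cited as \texttt{goodwillieChern}), which is itself a reformulation of the discrete nilinvariance theorems \texttt{goodwillieHCderivations} Theorems II.5.1 and IV.5.6. So what you are proposing is, in effect, a reconstruction of that cited lemma. Your overall strategy --- bootstrap the discrete theorem up to the simplicial setting via a tower of levelwise-nilpotent extensions, and control the totalisation weight-by-weight using the Hodge decomposition --- is the right one and is close in spirit to what Goodwillie actually does. But two steps in your sketch are not yet proofs, and a third is misstated.

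First, the assertion that each Postnikov stage $P_m S \to P_{m-1}S$ is, in \emph{every} simplicial level $n$, a square-zero extension is not automatic, and the phrase ``Eilenberg--MacLane piece concentrated in a single simplicial degree'' is simply wrong: a simplicial Eilenberg--MacLane module $K(\pi_m S, m)$ has nonzero entries in every simplicial degree $\geq m$, precisely because the Dold--Kan inverse of a one-term complex spreads out over all higher degrees by degeneracies. What you really need is a \emph{model} of the Postnikov tower in simplicial $k$-algebras whose successive kernels are ideals squaring to zero on the nose (hence levelwise); this is true, but it requires either Quillen's construction of Postnikov towers for simplicial rings by attaching square-zero cells, or the $k$-invariant pullback description, and it should be stated and cited rather than treated as evident. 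Note also that for fixed $n$ the kernel of $(P_m S)_n \to (P_{m-1}S)_n$ is typically a large direct sum of copies of $\pi_m S$, not a single copy, so nilpotence has to come from the ring structure (square-zero ideal), not from smallness.

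Second, even granted levelwise square-zero stages, you only conclude that each map $\HP^k(P_m S) \to \HP^k(P_{m-1}S)$ is a quasi-isomorphism; you never argue that $\HP^k(S) \to \HP^k(\pi_0 S)$ itself is one, and that requires saying why $\HP^k$ is compatible with the \emph{infinite} tower $S \simeq \lim_m P_m S$. This is not free --- Hochschild complexes do not commute with limits of rings in general --- but it can be rescued: the tower $(P_mS)_n$ stabilises at $S_n$ once $m>n$, and for a fixed Hodge weight $p$ and fixed total degree $q$ only the finitely many simplicial levels $n \le q+2p$ contribute to $\HP^{k,(p)}(\cdot)_q$ (because each $\HP^{k,(p)}(S_n)$ is bounded below at $-2p$), so in each $(p,q)$ the tower of chain groups stabilises and the limit is attained. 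You should make this explicit, since it is the actual substitute for the vague ``uniform control on connectivity'' you invoke, and it is where the Hodge decomposition earns its keep. (Over a field of characteristic zero, the final product over $p$ commutes with homology, so weight-by-weight quasi-isomorphisms do assemble.)

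Finally, a smaller point: the ``mapping cylinder'' step does not produce a levelwise surjection out of $S$ --- the cylinder factorisation gives a cofibration followed by a trivial fibration, so the non-surjectivity of $S\to T$ is shunted into the first map, not removed. What you want is the other factorisation, $S\xrightarrow{\sim} S'\twoheadrightarrow T$, using that fibrations of simplicial $k$-algebras are levelwise surjective in positive degrees and $\pi_0$-surjective by hypothesis. But in fact you never use levelwise surjectivity of $S\to T$ in the Postnikov argument --- you factor through $\pi_0 S\to\pi_0 T$ on both sides --- so this reduction can simply be deleted.
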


\begin{proposition}\label{Jhgslemma}
 The functor $J_X$ from  $s\CAlg_R$ to chain complexes is homotopy-homogeneous.
\end{proposition}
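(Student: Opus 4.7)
The plan is to decompose $J_X^p(-,k)$ as the cocone of the two functors $A\mapsto \HP^k(X_A)^{(p)}$ and $A\mapsto \HC^A(X_A)^{(p-1)}[-2]$, verify homotopy-homogeneity for each, and then conclude; the product over $p$ preserves the property, giving the result for $J_X(-,k)$. Throughout, for a nilpotent extension $A\to B$ with kernel $I$ and any $C\to B$, the induced map $A\times_B C \to C$ is again a nilpotent extension with kernel $I$, and $A\times_B C$ is already a homotopy pullback in simplicial $R$-modules because $A\to B$ is levelwise surjective.

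For the periodic piece, Theorem \ref{etalekey} applied to the nilpotent extensions $A\to B$ and $A\times_B C\to C$ forces the two vertical maps $\HP^k(X_A)\to \HP^k(X_B)$ and $\HP^k(X_{A\times_B C})\to \HP^k(X_C)$ to be quasi-isomorphisms; any commutative square with both verticals being equivalences is a homotopy pullback, so the square for $\HP^k(X_{(-)})$ is homotopy-homogeneous. For the cyclic piece, I would exploit the levelwise flatness of $X$ over $R$ to obtain a base-change identification
\[
\HC^A(X\ten_R A)\simeq \HC^R(X)\ten^{\oL}_R A
\]
respecting the Hodge decomposition, using that at the Hochschild level $(X\ten_R A)^{\ten_A(n+1)} = X^{\ten_R(n+1)}\ten_R A$ and that both the Connes operator and the characteristic-zero Hodge idempotents are $R$-linear. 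Homogeneity of $A\mapsto \HC^A(X_A)^{(p-1)}$ then reduces, with $M:=\HC^R(X)^{(p-1)}$, to homogeneity of $A\mapsto M\ten^{\oL}_R A$, which follows at once from the exactness of $M\ten^{\oL}_R(-)$ applied to the short exact sequence $0\to A\times_B C\to A\oplus C\to B\to 0$.

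Combining the two, $J_X^p(-,k)$ is the homotopy fibre of a natural transformation between homotopy-homogeneous chain-complex-valued functors, and since homotopy limits commute with each other the cocone is itself homotopy-homogeneous; the product over $p$ then gives the statement for $J_X(-,k)$. The main obstacle is the cyclic piece: Goodwillie's theorem controls only $\HP^k$, so the argument depends crucially on the flatness hypothesis on $X$ to push both the cyclic bicomplex and its Hodge decomposition through the change of base $R\to A$, whereas the periodic piece is handled directly by the formal \'etaleness supplied by Theorem \ref{etalekey}.
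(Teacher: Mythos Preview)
Your proof is correct and follows essentially the same approach as the paper: both split $J_X^p(-,k)$ into its periodic and cyclic constituents, handle the periodic piece via Goodwillie's theorem (Theorem~\ref{etalekey}) applied to the two nilpotent extensions $A\to B$ and $A\times_B C\to C$, handle the cyclic piece via the base-change identification $\HC^A(X_A)\simeq \HC^R(X)\ten_R^{\oL}A$, and then combine via the cocone. Your justification of the base-change step (via levelwise flatness of $X$ over $R$ and $R$-linearity of the Connes operator and the Hodge idempotents) is slightly more explicit than the paper's, which simply invokes that $\HC^R(X)$ is bounded below, but the content is the same.
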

\begin{proof}
The chain complexes $\HC^{R_n}(O(X)_n)$ of flat $R_n$-modules satisfy $\HC^{R_n}(O(X)_n)\ten_{R_n}A_n \cong \HC^{A_n}(O(X_A)_n)$ by construction, and lie in non-negative degrees. These tensor products are thus derived tensor products, and since $\HC$ is concentrated in non-negative chain degrees, 
 the product total complex in our definition of $\HC^{R}(X)$ and $\HC^{A}(X_A) $ is just a total direct sum. 
Thus $\HC^A(X_A)\simeq \HC^R(X)\ten_R^{\oL}A$ for all $A \in s\CAlg_R$, 
which ensures that the functor $A \mapsto  \HC^A(X_A)$ is homotopy-homogeneous by exactness of derived tensor products. 

Take a morphism  $C \to B$ and a    nilpotent extension $A \to B$  in $s\CAlg_R$ with kernel $I$.
Now, since $A\by_BC \to C$ is a nilpotent extension, having kernel $I \by \{0\}$,
Theorem \ref{etalekey} gives  quasi-isomorphisms
\[
 \HP^{\Q}(X_{ A\by_BC})\to \HP^{\Q}(X_C),  \quad \HP^{\Q}(X_A)\to \HP^{\Q}(X_B).   
\]
Thus $\HP^{\Q}(X_{ A\by_BC})$ is trivially quasi-isomorphic to the cocone of 
\[
 \HP^{\Q}(X_C)  \oplus \HP^{\Q}(X_A) \to \HP^{\Q}(X_B),    
\]
i.e.\ to the homotopy fibre product $\HP^{\Q}(X_A)\by^h_{\HP^{\Q}(X_B)} \HP^{\Q}(X_C)$, so the functor $A \mapsto \HP^{\Q}(X_A)$ is also homotopy-homogeneous. The result for $J_X$ now follows by taking homotopy fibres.
\end{proof}

\begin{lemma}\label{Jtgtlemma}
 For all $[f] \in \H_0(J_X(A))$, the tangent space $T_f( \tau_{\ge 0}(J_X,M))$ is canonically quasi-isomorphic to $\tau_{\ge 0}(\HC^R(X)\ten_{R}^{\oL}M[-1])$. 
 
If $O(X)$ is commutative, then  the tangent space $T_f( \tau_{\ge 0}(J_X^p,M)$ is canonically quasi-isomorphic to $\tau_{\ge 0}(\HC^R(X)^{(p-1)}\ten_{R}^{\oL}M[-1])$.
\end{lemma}
\begin{proof}
Since $A \oplus M \to A$ is a nilpotent extension, substituting in Theorem \ref{etalekey} gives
\begin{align*}
J_X(A \oplus M)&=  \cocone\left( \HP^{\Q}(X_{A\oplus M}) \xra{\mathsf{S}}  \HC^{A\oplus M}(X_{A\oplus M})[-2]\right),\\
&\xla{\sim} \cocone\left( \HP^{\Q}(X_A) \xra{\mathsf{S}}  \HC^R(X)\ten_R^{\oL}(A\oplus M) [-2]\right),\\
&\simeq J_X(A) \oplus (\HC^R(X)\ten_R^{\oL}M[-1]),
\end{align*}
and similarly for $J_X^p$.
\end{proof}


\subsection{The semiregularity map}  
\begin{definition}
Given a simplicial $A$-algebra $S$ with ideal $J$, such that $S$ and $S/J$ are both levelwise flat over $A$, write $\HC^A(S\to S/J):= \cocone(\HC^A(S) \to \HC^A(S/J))$, and define $\HH^A(S\to S/J)$ and $\HN^A(S\to S/J)$ similarly.     
\end{definition}

Following the convention that weights are additive with respect to tensor products, a grading $\cW$ on  a ring  compatible with the multiplication  induces gradings $\cW$ on the various cyclic homology complexes as in \cite[\S 9.9]{W}, 
with all natural constructions automatically preserving the gradings. 

Take a  simplicial commutative $\Q$-algebra $A$, a simplicial $A$-module $M$, 
and a (possibly non-commutative) simplicial $A$-algebra $E$, with $M$ and $E$ levelwise flat over $A$. Then we may put a grading on  the trivial square-zero extension $C:= A\oplus M$ of $A$  by setting $A$ to have weight $0$ and $M$ to have weight $1$.  Also setting $E$ to have weight $0$, we then have the following lemma. 

\begin{lemma}\label{kunnethlemmaa}
There is a commutative diagram, for  $\ten = \ten_A$,
\[
 \begin{CD}
   \HC^A(E)\ten  \cW_1C      @<{\mathsf{I}_E \ten \id}<< \HH^A(E)\ten  \cW_1\HH^A(C)^{(0)} @>{\id\ten \mathsf{B}_C}>{\sim}>  \HH^A(E)\ten  \cW_1\HH^A(C)^{(1)}[1]\\
            @|                              @V{\mathsf{I}_{E\ten C}}V{\sim}V       @A{( \id \ten \pr_{\HH^{(1)}}) \circ \pi_{\HH} }A{\sim}A \\
  \cW_1\HC^C(E\ten C)    @<{\phi}<<   \cW_1\HC^A(E\ten C) @>{\mathsf{B}_{E\ten C}}>{\sim}>  \cW_1\HN^A(E\ten C)[1],
   \end{CD} 
\]
where $\sim$ denotes quasi-isomorphism, $\phi$ is the map induced by base change $A \to C$, and the map on the right combines  $\pi_{\HH}\co \HN \to \HH$ with  the K\"unneth isomorphism $\HH^A(E\ten_AC) \cong \HH^A(E)\ten_A\HH^A(C)$ and projection $\pr_{\HH^{(1)}} \co \HH^A(C) \to \HH^A(C)^{(1)}$. 
\end{lemma}
\begin{proof}
The first  square commutes by compatibility of $\mathsf{I}$ with base change, since $\HH^C(C)=C$.
For commutativity of the second square, observe that by definition
\[
\pi_{\HH} \circ \mathsf{B}_{E\ten C} \circ \mathsf{I}_{E\ten C}= \mathsf{B}_{E\ten C} \co \HH^A(E\ten C) \xra{\mathsf{B}_E\ten \id + \id \ten \mathsf{B}_C} \HH^A(E\ten C),
\]
with projection to $\HH^A(C)^{(1)}$ then killing the identity term on $\HH^A(C)^{(0)}$.

By Goodwillie's Theorem over $A$ (in the form of \cite[Corollary 9.9.2]{W}, based on the proof of \cite[Theorem II.5.1]{goodwillieHCderivations}),  $\cW_i\HP^A(E\ten C)$ is acyclic for all $i>0$, 
so  the map $\mathsf{B}_{E\ten C}\co \HC^A(E\ten C) \to \HN^A(E\ten C)[1] $ is a quasi-isomorphism in all non-zero weights by the $\mathsf{SBI}$ sequence. 

Since we discard higher weights, the  quasi-isomorphism  $\mathsf{B}_C \co \cW_1\HH^A(C)^{(0)} \to \cW_1\HH^A(C)^{(1)}[1]$  can be seen from  the HKR equivalence for $\HH^A(\Symm_AM)$, via which it corresponds to the de Rham derivative $d \co \cW_1C \xra{\sim} \cW_1\Omega^1_{C/A}$.

It thus remains only to show that the middle map $\mathsf{I}_{E\ten C}$ is a quasi-isomorphism.
Writing $\HC:=\HC^A$, $\HH:= \HH^A$, we can characterise it 
as the composite  of the quasi-isomorphisms
\begin{align*}
 \HH(E) \ten M &\xra{(\mathsf{I}_E,0)\ten \id} \cocone\left(\HC(E) \xra{\mathsf{S}_E} \HC(E)\right)\ten M \\
 &\to \cocone\left(\HC(E) \ten \cW_1\HC(C) \xra{\mathsf{S}_E\ten \id- \id\ten \mathsf{S}_C} \HC(E) \ten \cW_1\HC(C)[-2]\right)\\
 &\simeq \cW_1\HC(E\ten C).
\end{align*}
Here, the  first map is a quasi-isomorphism by the $\mathsf{SBI}$ sequence.  The second map is well-defined because $\mathsf{S}$ vanishes on  $\HC(C)^{(0)}$, with the natural isomorphism $M \cong \cW_1\HC(C)^{(0)}$  inducing a quasi-isomorphism  $M \to \cW_1\HC(C)$ by \cite[Exercise 9.9.1]{W}.  The final quasi-isomorphism is from the K\"unneth formula for cyclic homology \cite[Corollary 4.3.12]{lodayCyclic}.
\end{proof}

\begin{lemma}\label{kunnethlemma} 
In the setting of Lemma \ref{kunnethlemmaa},
the obvious map
\begin{align*}
\bar{\phi} \co   \HC^A(E\ten_AC \to E)\to &\cocone\left(\HC^C(E\ten_AC)\to \HC^A(E)\right) \\  
  &\cong \cocone\left(\HC^A(E)\ten_AC\to \HC^A(E)\right)
  \simeq \HC^A(E)\ten_A M      
\end{align*}
 is naturally homotopic to the composite 
\begin{eqnarray*}
 \HC^A(E\ten_AC\to E) &\xra[\sim]{\mathsf{B}}& \HN^A(E\ten_AC\to E)[1]\\
&\xra{\pi_{\HH}}&   
 \HH^A(E\ten_AC\to E)[1]
\cong  \HH^A(E)\ten_A \HH^A(C\to A)[1]\\
&\xra{\mathsf{I}\ten\delta}&  \HC^A(E)\ten_AM.
\end{eqnarray*}
where $\delta \co \HH^A(C\to A)[1] \to M$ is the map induced by the canonical derivation $\Omega_{C/A}^1 \cong M$. 
\end{lemma}
\begin{proof} 

Since $\HC(E)\ten M$ has weight $1$, we may restrict to $\cW_1$. In particular, $\bar{\phi}$ is given by the 
natural map $\phi$ from Lemma \ref{kunnethlemmaa} composed with projection to $\cW_1$

The statement now follows by substitution in Lemma \ref{kunnethlemmaa}, once we  note that $\delta$ is left inverse to the quasi-isomorphism $\mathsf{B}_C \co \cW_1\HH^A(C)^{(0)} \to \cW_1\HH^A(C)^{(1)}[1]$.
\end{proof}

\begin{definition}\label{lefschetzdef} 
Following \cite[\S 4.3]{BresslerNestTsygan}, 
given a levelwise flat simplicial $A$-algebra $E$ and a 
 perfect $E$-complex $\sF$, define  the \emph{Lefschetz map} $\cL_{E/A} \co \oR\HHom_E(\sF, \sF) \to \HH^A(E)$ to be given by 
\begin{align*}
 \oR\HHom_E(\sF, \sF) \simeq  \sF\ten^{\oL}_E \oR\HHom_E(\sF,E)
\xla[\beta]{\sim} &(\sF\ten^{\oL}_A \oR\HHom_E(\sF,E))\ten^{\oL}_{E\ten_A^{\oL}E^{\op}} E \\
&\xra{\ev \ten \id} E \ten^{\oL}_{E\ten_A^{\oL}E^{\op}} E,     
\end{align*}         
where $\ev \co \sF\ten^{\oL}_A \oR\HHom_E(\sF,E)) \to E$ is the evaluation map.
\end{definition}

Derived Morita invariance of Hochschild homology  
\cite[Theorem 5.2]{kellerModelDGCat} 
gives a quasi-isomorphism $  \HH^A(E) \to \HH^A( \per_{dg}(E)) $, where $\per_{dg}(E)$ is the dg category of cofibrant perfect $E$-complexes given by applying \cite[\S 4.6]{kellerModelDGCat} to the dg algebra  given by the Dold--Kan normalisation of $E$ equipped with the Eilenberg--Zilber shuffle product. Writing $\cP(X,Y):= \HHom_E(Y,X)$, the bar construction
 $\HH^A( \per_{dg}(E))$ contains, for $\sF$ cofibrant, a subcomplex given by the total complex of
\begin{align*}
& \ldots \to \left(\cP(\sF,E)\ten_A\cP(E,E)^{\ten_A^{n-1}}\ten_A\cP(E,\sF)\right)\oplus \cP(E,E)^{\ten_A^{n+1}}  \to \ldots\\
 &\ldots\to 
 \left(\cP(\sF,E)\ten_A\cP(E,\sF)\right) \oplus \left(\cP(E,E)\ten_A\cP(E,E)\right)  \to
   \cP(\sF,\sF) \oplus\cP(E,E), 
\end{align*}
which is  a quasi-isomorphic subcomplex since it is just a copy of 
\[
 \cone\left(\HH^A(E, \HHom_E(E,\sF)\ten_A\HHom_E(\sF,E)) \xra{(\beta,\HH^A(\ev))} \HHom_E(\sF, \sF) \oplus \HH^A(E)\right)
\]
and $\beta$ is a quasi-isomorphism. It thus follows that the Lefschetz map can alternatively be characterised as the composite $\HHom_E(\sF, \sF) \to \HH^A( \per_{dg}(E)) \xla{\sim} \HH^A(E)$. 

\begin{proposition}\label{AJtgtprop}
Given $\sF \in \Perf(X)$, the tangent map 
\begin{align*}
 T_{\sF}(\Xi, M) \co T_{\sF}(\Perf_X,M) &\to T_{\Xi(\sF)}(\tau_{\ge 0}J_X,M)\\ 
 \tau_{\ge 0}(\oR \HHom_{O(X)}(\sF,\sF\ten_R M)[-1]) &\to \HC^R(X)\ten_{R}^{\oL}M[-1] 
\end{align*}
on simplicial $R$-modules $M$ is given by the composite map 
\[
 \oR \HHom_{O(X)}(\sF,\sF\ten_R M) \xra{\cL_{O(X)/R}} \HH^R(X)\ten^{\oL}_RM \xra{\mathsf{I}} \HC^R(X)\ten_{R}^{\oL}M.       
\]
\end{proposition}
\begin{proof} 

 Writing $C= R\oplus M$ and $E=O(X)$, 
 with  $\lambda$ the equivalence  of Lemma \ref{Jtgtlemma} and  $\theta \co \HN^{\Q}(E\ten_{\Q}-) \to J_X$ the canonical map, we have  a homotopy commutative diagram
 \[
  \xymatrix{
  \HC^{\Q}(E\ten_R^{\oL}C \to E)[-1] \ar[rr]^{\mathsf{B}}_{\sim}  \ar[d] & &\HN^{\Q}(E\ten_R^{\oL}C \to E)\ar[dl] \ar[dd]^{\theta} 
  \\  
  \HC^{R}(E\ten_R^{\oL}C \to E)[-1] \ar[d]_{\bar{\phi}} \ar[r]^{\mathsf{B}}_{\sim} & \HN^{R}(E\ten_R^{\oL}C \to E) \ar[dl]^{(\mathsf{I} \ten \delta) \circ \pi_{\HH}}  \\ 
    \HC^R(E)\ten_R^{\oL}M[-1] \ar[rr]^{\lambda}_{\sim} && J_X(C \to R)  
}
\]
for $\bar{\phi}$ and $(\mathsf{I} \ten \delta) \circ \pi_{\HH}$ as in Lemma \ref{kunnethlemma}, which gives commutativity of that triangle; the top quadrilateral commutes by naturality of $\mathsf{B}$, and the outer rectangle commutes up to canonical homotopy by the $\mathsf{SBI}$ sequence. 
 
Now, $\Xi = \theta \circ \ch^-$, and  the composite map $K \xra{\ch^-} \HN \xra{\pi_{\HH}} \HH$ is just the Dennis trace $\ch$, so commutativity of the lower triangle in the diagram above gives
the tangent map of $\Xi$ as the composite
\[
T_{\sF}(\Perf,M) \xra{\ch-\ch(\sF)} \HH^R(E)\ten_R^{\oL} \HH^R(C\to R)   \xra{\mathsf{I}\ten\delta}  \HC^R(E)\ten_R^{\oL}M[-1].
\]

%

Since $T_{\sF}(\Perf_E,M[-1])$ deloops $ T_{\sF}(\Perf_E,M)$, the map is determined by its behaviour on morphisms 
$(\tau_{\ge 0} \oR \HHom_{E}(\sF,\sF\ten_R M))[-1]$.

Take $\sF$ to be cofibrant and write $E'$ for the simplicial ring corresponding to the dg algebra $\tau_{\ge 0} \HHom_{E}(\sF,\sF)$ (taking levelwise flat replacement if necessary).  
The Dennis trace $\ch_{E'}\co \GL_1(E'\ten_RC) \to \HH^R(E'\ten_RC)$ maps $g$ to $g^{-1}\ten g$, so for $\alpha \in E'\ten_RM$ we have
\[
  \delta (\ch_{E'}(1+\alpha)) =   \delta\left((1-\alpha)\ten (1+\alpha)\right)= (1-\alpha)\alpha= \alpha \in \HH^R(E')\ten_RM,
\]
meaning  the composite $\delta \circ \ch_{E'} \co E'\ten_RM[-1] \to \HH^R(E')\ten_R^{\oL}M[-1]$ is just the natural inclusion map.

Morita functoriality allows us to pass from $\ch_{E'}$ to $\ch_E$ by composing with the maps $ \HH^R(E') \to \HH^R( \per_{dg}(E)) \xla{\sim} \HH^R(E)$, yielding the Lefschetz map $\cL_{E/R}$. 
\end{proof}

Combining Lemma \ref{obs} with Proposition \ref{AJtgtprop} (after base change $R \to B$) gives the following:
\begin{corollary}\label{AJtgtcor} 
Given a square-zero extension $I \to A \xra{e} B $ in   $ s\CAlg_R $ and  $\sF \in \Perf_X(B)$, the space $\Perf_X(A)_{\sF}$ of deformations of $\sF$ over $A$ (i.e.\ the homotopy fibre  of  $\Perf_X(A) \to  \Perf_X(B)$ over $\{\sF\}$) fits into a natural
commutative diagram  
\[
\begin{CD}
 \Perf_X(A)_{\sF} @>>> \{\sF\} @>{o_e}>> \tau_{\ge 0}(\oR\HHom_{X_B}(\sF,\sF\ten_B^{\oL}I)[-2])\\
 @V{\Xi}VV @V{\Xi}VV @VV{\mathsf{I} \circ \cL_{O(X_B)/B}}V \\
 J_X(A) @>>> J_X(B) @>{o_e}>> \tau_{\ge 0}(\HC^R(X)\ten_{R}^{\oL}I[-2])
 \end{CD}
\]
of homotopy fibre sequences, for the Lefschetz map $\cL$ of Definition \ref{lefschetzdef}. 
\end{corollary}

\begin{remark}
At this stage, everything being affine, the corollary implies that deformations of objects are unobstructed, simply because in this affine setting the obstruction space $\HC^R_{-2}(X,I)$ is $0$. Taking derived global sections leads to the   constructions  for schemes and stacks in \S \ref{globalsn}, where the relevant space becomes non-trivial. 
 \end{remark}

 \subsection{dg algebras and dg categories}\label{orlovsn}
 
The definitions of $J_X$ and $\Xi$ generalise in the obvious way to dg algebras, and indeed dg categories, in place of simplicial $R$-algebras $\cO(X)$. Although we will not need these results in \S \ref{globalsn}, we now discuss the extent to which the preceding results extend to those settings.

When chain complexes are not bounded below, there are some subtleties in generalising Definition \ref{HCdef}. Spaltenstein $K$-flatness \cite{spaltenstein} is the relevant flatness condition. As in \cite[\S 3.2]{kellerHCRingedSp},
the Hochschild complex $\HH$  has a natural analogue as a double complex, whose direct sum total complex we take to be the Hochschild complex $\HH$. The complexes  $\HC, \HN$ are then obtained from this in the usual way \cite[\S 3.4]{kellerHCRingedSp}, as $\Tot \CC\CC_{**}$ and  $\Tot^{\Pi} \CC\CC_{**}^N$, while $\HP$ has to be taken as a product-sum total complex  $\Tot^{\Pi,\oplus} \CC\CC_{**}^P$ 
to ensure quasi-isomorphism invariance.

Given a dg category $\C$ for which $\Perf_{\C}$ is homotopy-homogeneous,
the proof of Corollary \ref{AJtgtcor} then automatically applies provided the conclusion of  Goodwillie's Theorem holds for $\C$, i.e.\ provided $\HP^{\Q}(\C\ten^{\oL}_RA) \xra{\sim}\HP^{\Q}(\C\ten^{\oL}_RB)$ for all square-zero extensions $e \co A \to B$ of simplicial $R$-algebras. 

From the long exact sequence of homology, the obstruction $\mathsf{I}\cL_{(\C\ten_RB )/B}(o_e(\sF))$ to lifting $\Xi(\sF)$ from $ \pi_0J_{\C}(B)$ to $\pi_0J_{\C}(A)$ is then guaranteed to vanish whenever the natural map $\HC^R_{-2}(\C,I)\to \H_{-1}J_{\C}(A)$ is injective, where $I= \ker e$. Injectivity of $\HC^R_{-2}(\C,I)\to \HC^R_{-2}(\C,A)$ would imply this; in particular, it holds if $R$ is a field and  $\pi_*A \onto \pi_*B$. Thus for any such  dg category $\C$,  the obstruction  $o_e(\sF) $  
 to lifting a perfect $\C\ten_RB$-module $\sF$ from the derived category $\cD(\C\ten_RB)$ to $\cD(\C\ten_RA)$ would then lie in 
 \[
 \ker\left( \mathsf{I}\cL_{(\C\ten_RB )/B} \co \EExt^2_{\C\ten_RB}(\sF,\sF\ten_BI)\to \HC^R_{-2}(\C,I)\right).
 \]

Goodwillie's Theorem and hence Corollary \ref{AJtgtcor} do not  extend to all dg algebras and dg categories, but they are true for dg algebras concentrated in non-negative chain degrees, via  their Quillen equivalence with simplicial algebras given by the Dold--Kan and Eilenberg--Zilber constructions. All variants of cyclic homology are derived Morita invariant,  preserve exact sequences in the Morita homotopy category, and are additive with respect to semi-orthogonal decompositions \cite[Theorem 5.2a]{kellerModelDGCat}. Thus the class of small $\Q$-linear  dg categories  for which the conclusion of Goodwillie's Theorem 
holds is closed under those operations.
As a special case, we have: 

\begin{corollary}\label{orlovcor} 
If $\cN$ is a  geometric noncommutative scheme in the sense  of \cite[Definition 4.3]{orlovSmoothProperNC} over a characteristic $0$ field $k$, then for any square-zero extension $e\co A \to B$ of commutative $k$-algebras with kernel $I$ and for any perfect $\cN\ten_kB$-complex $\sF$,  the obstruction $o_e(\sF)$ to lifting $\sF$ to a perfect $\cN\ten_kA$-complex  
lies in 
\[
\ker\left( \cL_{(\cN\ten_kB )/B} \co \Ext^2_{\cN\ten_kB}(\sF, \sF\ten^{\oL}_BI) \to \HH_{-2}^k(\cN)\ten_{k}I\right).
\]
\end{corollary}
\begin{proof}
Up to derived Morita equivalence, $\cN$ is defined to be an admissible dg subcategory  of  the dg category $\per_{dg}(\sO_Y)$ of perfect complexes on  a smooth proper $R$-scheme $Y$. By \cite[\S 4.4 following Theorem 3.25]{orlovSmoothProperNC}, $\cN$ is thus a smooth proper dg category, so \cite[Lemma 2.8 and Corollary 3.15]{TVdg} imply that $\Perf_{\cN}$ is  a derived Artin $\infty$-stack, hence homotopy-homogeneous. 

Since $\cN$ is a semi-orthogonal summand of $\per_{dg}(\sO_Y)$, \cite[Theorem 5.2a]{kellerModelDGCat} implies that it gives rise to a direct summand on all variants of cyclic homology. 
Because $Y$ is quasi-compact and semi-separated, the Thomason--Trobaugh excision argument of  \cite[\S 5]{kellerHCRingedSp} as summarised in  \cite[Theorem 5.2b,c]{kellerModelDGCat} means we can calculate cyclic homology of $\per_{dg}(\sO_Y)\ten_kA$ using \v Cech complexes, so the map  $\HP^{\Q}(\cN\ten_kA) \to \HP^{\Q}(\cN\ten_kB) $ is a quasi-isomorphism as a consequence of Goodwillie's theorem applied  Zariski locally on $Y$. 

Corollary \ref{AJtgtcor} thus extends to this setting. Since $k$ is a field, the morphism $A \to B$ admits a $k$-linear splitting, so $\H_0J_{\cN}(A) \to \H_0J_{\cN}(B)$ is surjective and $\mathsf{I}\cL(o_e(\sF))=0$. 

Finally, as  in Remark \ref{horproperrk} the Lefschetz principle  makes $\mathsf{I}$ injective for $\per_{dg}(\sO_Y)$. Thus $\mathsf{I}\co \HH_{*}^k(\cN) \to \HC_{*}^k(\cN)$ is also injective, being a restriction to  direct summands, so $\cL(o_e(\sF))=0$.
\end{proof}

\begin{remark}
Until the final step, the proof of Corollary \ref{orlovcor} only requires that $\cN$ be a smooth proper dg subcategory of  $\per_{dg}(\sO_Y)$, for $Y$ quasi-compact and semi-separated. In such cases, we can still say  that $\mathsf{I}\cL(o_e(\sF))=0$.

We can even replace $\sO_Y$ with  any presheaf $\sA$ of simplicial associative $R$-algebras  on the site of affine open subschemes of  $Y$ for which $\pi_*\sA$ admits  a quasi-coherent $\sO_Y$-algebra structure. In particular,  we can take $\sA$ to be the structure hypersheaf $\sO_{Y'}$ of any derived scheme $Y'$ with underived truncation $Y$, using \cite[Theorem \ref{stacks-lshfthm}]{stacks2}, or to be any  quasi-coherent simplicial associative $\sO_{Y'}$-algebra.

The extension to $\sA$ follows because the  excision argument of  \cite[\S 5]{kellerHCRingedSp}, implying Zariski descent for $\HP$, just relies on  the functors $ j^* \co \per_{dg}(\sA(U)) \to \per_{dg}(\sA(V))$ being dg quotient maps 
whose dg kernels depend only on the complement $U \setminus V$. This follows from
\cite[Proposition 1.4ii]{drinfeldDGQuotient}
and the isomorphism $\H_*(j^*M) \cong \sO(V)\ten_{\sO(U)}\H_*M$ given by the algebraic Eilenberg--Moore spectral sequence.

\end{remark}

\section{The Abel--Jacobi map for (derived) schemes and stacks}\label{globalsn} 
From now on, all rings will be commutative.

\subsection{Derived de Rham cohomology}

\begin{definition}
Given $A \in s\CAlg_{\Q}$  and   $B \in s\CAlg_A $, define the de Rham complex to be the chain complex
\[
\DR(B/A):= \prod_{n\ge 0} \Omega^n(B/A)[n]= \prod_{n\ge 0} (\L^n_B\Omega^1(B/A))[n],
\]
with differential given by combining the differentials on the chain complexes $\Omega^n(B/A)$ with the de Rham differential, i.e.\ the derivation induced by $d \co B \to \Omega^1(B/A)$. This has a Hodge filtration given by
$F^p\DR(B/A):= \prod_{n\ge p} \Omega^n(B/A)[n].$ 
\end{definition}

Beware that the de Rham complex is usually regarded as a cochain complex, so negative homology groups will correspond to positive cohomology groups; we are using chain complexes to facilitate comparison with cyclic homology. 

The functors $\Omega^n(-/A)$ clearly preserve weak equivalences between cofibrant (and indeed levelwise ind-smooth) simplicial $A$-algebras, and the product total complex sends column quasi-isomorphisms of second quadrant double complexes to quasi-isomorphisms (see e.g. \cite[Acyclic Assembly Lemma 2.7.3]{W}), so $\DR(-/A)$ preserves weak equivalences between such objects.

\begin{definition}\label{affLDRdef}
 Given $A \in s\CAlg_{\Q}$  and    $B \in s\CAlg_A $, 
 define the left-derived de Rham complex $\oL\DR(B/A)$ by first taking a cofibrant replacement $\tilde{B} \to B$ over $A$ in the model structure of \cite[\S II.4]{QHA}, 
 then setting
\[
\oL\Omega^p(B/A) := \Omega^p(\tilde{B}/A), \quad \oL\DR(B/A):= \DR(\tilde{B}/A).
\]
Note that this is well-defined up to quasi-isomorphism,  that such replacements can be chosen functorially in both $B$ and $A$, and that  $\oL\Omega^1(B/A)$ is a model for the cotangent complex $\bL(B/A)$.

The complex $ \oL\DR(B/A)$ has a Hodge filtration $F^p\oL\DR(B/A):= F^p\DR(\tilde{B}/A)$, and we  write $\oL\DR(B/A)/F^p:=\oL\DR(B/A)/F^p\oL\DR(B/A)$.
\end{definition}
 
\begin{remark}\label{DixExp}
Following the ideas of \cite{Gr} as developed in \cite{Simfil,GaitsgoryRozenblyumCrystal}, there is a more conceptual interpretation of the derived de Rham complex. 

For any functor $F$ on $s\CAlg_A$, we may define $F_{\inf}(C):= F((\pi_0C)^{\red})$, and then $ F_{\mathrm{strat}}(C) := \im(\pi_0F(C) \to F_{\inf}(C))$ whenever images make sense. 
Note that if  $F$ is  represented by a smooth algebraic space, then $F_{\mathrm{strat}}=F_{\inf}$.

Now for $A \to B$ as above, 
derived $\Hom$ in the model category $s\CAlg_A $ gives a simplicial set-valued functor $\oR \Spec B = \HHom_{s\CAlg_A}(\tilde{B},-)$ on $s\CAlg_A$, with $(\oR \Spec B)_{\inf}(C) \cong \Hom_{\CAlg_{\pi_0A}}(\pi_0B,(\pi_0C)^{\red} )$.

Now, $F_{\mathrm{strat}}(C)$ is equivalent to the \v Cech nerve of $F(C)$ over $F_{\inf}(C)$, which for $F= \oR \Spec B$ is represented  in level $n$ by formal completions of the diagonal map $\tilde{B}^{\ten_A(n+1)} \to \tilde{B}$. Homology of symmetric powers then shows that $\oL\DR(B/A)\simeq \oR \Gamma( (\oR \Spec B)_{\mathrm{strat}}, \O)$, where $\O$ is the hypersheaf given by $\O(C)=C$.
\end{remark}


\begin{proposition}\label{cfDRHC}
For a levelwise flat morphism $A \to B$ in $s\CAlg_{\Q}$ and for all $p \in \Z$, there are canonical quasi-isomorphisms
\begin{align*}
&\HP^A(B)^{(p)} \simeq \oL\DR(B/A)[-2p],\quad  &&\HC^A(B)^{(p)} \simeq (\oL\DR(B/A)/F^{p+1})[-2p],\\  &\HN^A(B)^{(p)} \simeq F^p\oL\DR(B/A)[-2p], \quad  &&\HH^A(B)^{(p)} \simeq \oL\Omega^p(B/A)[-p],  
\end{align*}
with the $\mathsf{SBI}$ sequences corresponding to the short exact sequences $0 \to F^p\oL \DR \to \oL\DR \to \oL\DR/F^p\to 0$ and $0 \to \oL\Omega^p[-p] \to \oL\DR/F^{p+1}\to   \oL\DR/F^p\to 0 $.
       %
       \end{proposition}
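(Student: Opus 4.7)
The plan is to reduce to the classical Hochschild--Kostant--Rosenberg (HKR) theorem via a levelwise smooth cofibrant replacement. Choose a cofibrant replacement $\tilde{B} \to B$ in $s\CAlg_A$ such that each $\tilde{B}_n$ is a polynomial algebra over $A_n$; then by definition $\oL\Omega^p(B/A) = \Omega^p(\tilde{B}/A)$ and $\oL\DR(B/A) = \DR(\tilde{B}/A)$ with its Hodge filtration. Each of the homology theories $\HH^A, \HC^A, \HN^A, \HP^A$ from Definition \ref{HCdef} is constructed by applying the corresponding functor levelwise and then taking the product total complex, and since each such functor preserves weak equivalences of simplicial commutative algebras, the cofibrant replacement induces quasi-isomorphisms $\HH^A(B) \simeq \HH^A(\tilde{B})$, and similarly for $\HC^A, \HN^A, \HP^A$. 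It therefore suffices to produce the four quasi-isomorphisms for $\tilde{B}$.

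First I would establish the identifications in each simplicial degree. For each $n$, the map $A_n \to \tilde{B}_n$ is smooth, so by HKR (\cite[Theorem 3.4.4, Theorem 3.4.12]{lodayCyclic}) there is a canonical Hodge-refined isomorphism $\HH^{A_n}(\tilde{B}_n)^{(p)} \simeq \Omega^p_{\tilde{B}_n/A_n}[-p]$, under which Connes' $B$-operator corresponds to the de Rham differential. Combining this with the mixed-complex descriptions of $\HC, \HN, \HP$ recalled in \cite[\S 9.6--9.8]{W}, the weight-$p$ components of $\HC^{A_n}(\tilde{B}_n)$, $\HN^{A_n}(\tilde{B}_n)$, $\HP^{A_n}(\tilde{B}_n)$ identify canonically with $(\DR(\tilde{B}_n/A_n)/F^{p+1})[-2p]$, $F^p\DR(\tilde{B}_n/A_n)[-2p]$, and $\DR(\tilde{B}_n/A_n)[-2p]$ respectively, and these identifications are natural in $\tilde{B}_n$.

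By naturality, the levelwise identifications assemble into quasi-isomorphisms of bisimplicial chain complexes. Passing to product total complexes on each side yields the four required quasi-isomorphisms, since the product total complex of the bisimplicial object $n \mapsto \DR(\tilde{B}_n/A_n)$ (respectively $\Omega^p(\tilde{B}_n/A_n)$, $F^p\DR(\tilde{B}_n/A_n)$, $\DR(\tilde{B}_n/A_n)/F^{p+1}$) is exactly $\DR(\tilde{B}/A) = \oL\DR(B/A)$ (respectively the corresponding piece of its Hodge filtration). The $SBI$ compatibility is then formal: under the mixed-complex description the triangles $\HN^{(p)} \to \HP^{(p)} \to \HC^{(p-1)}[-2]$ and $\HH^{(p)} \to \HC^{(p)} \to \HC^{(p-1)}[-2]$ correspond, after the shift $[-2p]$, to the short exact sequences $0 \to F^p\oL\DR \to \oL\DR \to \oL\DR/F^p \to 0$ and $0 \to \oL\Omega^p[p] \to \oL\DR/F^{p+1} \to \oL\DR/F^p \to 0$ (noting $F^p/F^{p+1} \simeq \oL\Omega^p[p]$).

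The main obstacle is the derived form of HKR: one needs the levelwise identifications, together with their $B$-operator and Hodge-weight refinements, to be natural enough to assemble coherently across the simplicial direction. This amounts to verifying that the $\lambda$-decomposition on the bar complex is compatible with the bisimplicial structure coming from the cofibrant resolution, and that the comparison of $B$ with $d$ is strictly natural in the discrete smooth algebra. Once that naturality is in hand, the totalisation step and the $SBI$ comparison are routine, as is the independence of the result from the choice of cofibrant replacement (which follows because any two such replacements are connected by a zigzag of weak equivalences and all functors involved preserve them).
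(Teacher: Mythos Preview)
Your proposal is correct and follows essentially the same route as the paper: reduce to the classical Hochschild--Kostant--Rosenberg theorem for discrete smooth algebras, then extend to the simplicial setting by taking a levelwise-polynomial cofibrant replacement and totalising the resulting bisimplicial diagram. The paper's proof is a two-sentence sketch citing \cite[\S 5]{majadas} for exactly this argument, whereas you spell out the levelwise HKR identifications, the mixed-complex interpretation, and the naturality needed to assemble them; the only cosmetic difference is that the paper speaks of the diagonal of the bisimplicial object while you pass to the product total complex, which are quasi-isomorphic by Eilenberg--Zilber.
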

\begin{proof}
When $A$ and $B$ are concentrated in degree $0$, with $B$ smooth over $A$, this is a well-known consequence of the Hochschild--Kostant--Rosenberg theorem, as in \cite[Theorem 9.8.13]{W}, and that proof immediately generalises to filtered colimits of smooth morphisms, including infinite polynomial algebras. As observed in \cite[\S 5]{majadas}, the general case for $\HH$ and  $\HC$ then follows by taking a cofibrant replacement for $B$ and passing to the total complex 
of the resulting bisimplicial diagram. The expressions for $\HN$ and $\HP$ follow similarly because product total complexes respect the relevant quasi-isomorphisms.     
\end{proof}

\begin{remark}\label{algDRrmk}
Note that \cite[Theorem 2.2]{emmanouil} (following  \cite[Theorem 5]{FeiginTsygan}) shows that for a  finitely generated algebra $B$ over a characteristic zero field $k$, the complex $\HP^k(B)^{(p)}[2p]$ is quasi-isomorphic to the infinitesimal cohomology complex, or equivalently to Hartshorne's algebraic de Rham cohomology \cite{HartshorneDRcoho} over $k$. For an alternative proof, see \cite{bhattDerivedDR}. 
\end{remark}

\subsection{Global constructions} 

As in \cite{weibelHCschemes}, we now use naturality of the affine constructions for cyclic homology to pass from local to  global. 

Fixing terminology, we will refer to a simplicial set-valued functor on $s\CAlg_R$ as a derived  $\infty$-stack over $R$ if it preserves weak equivalences and  satisfies \'etale hyperdescent, for \'etale morphisms in the sense of \cite[Theorem 2.2.2.6]{hag2}; this is a $D^-$-stack in the terminology of \cite[Definition 2.2.2.14]{hag2}. 
Derived affine schemes of the form $\oR \Spec A$ for $ A \in s\CAlg_R$ (see Remark \ref{DixExp}) are known as derived affine schemes. 
%

\subsubsection{Perfect complexes}

\begin{definition} 
Given a 
derived  $\infty$-stack $X$ over $\Q$,  define the simplicial set
$\Perf(X)$ to be the space $\oR\Gamma(X, \Perf)$ of maps from $X$ to $\Perf$ in the simplicial category of simplicial set-valued  functors on $s\CAlg_R$.
\end{definition}
Note that this is consistent with Definition \ref{perfdef} when $X$ is a derived affine scheme, by the model Yoneda lemma. 

\begin{remarks}\label{hyperdescent}
Using the explicit hyperdescent formulae of  \cite[Examples \ref{stacks-categs}]{stacks2}, when $X$ is a strongly quasi-compact derived Artin $n$-stack the simplicial semiring $\Perf(X)$ can be constructed as follows. First, \cite[Theorem 4.7]{stacks2} provides the existence of a suitable  resolution of the \'etale hypersheaf  $X$ by a derived Artin hypergroupoid $X_{\bt}$, which is a  simplicial derived affine scheme satisfying properties analogous to those of a Kan complex \cite[Examples 3.5]{stacks2}. 
We then define a cosimplicial simplicial semiring given by $\CC^n(X_{\bt},\Perf(\O_{X})):= \Perf(X_n)$, and set
\[
 \Perf(X)=\oR\Tot_{s\Set} \CC^{\bt}(X_{\bt}, \Perf(\O_{X})),
\]
where $\oR\Tot_{s\Set}$ is the derived total functor from cosimplicial simplicial sets to simplicial sets, as in \cite[\S VIII.1]{sht}.

Because $\Perf$ forms an \'etale  hypersheaf, the definition of $\Perf(X)$ above agrees with the standard definition for underived schemes, and indeed for algebraic stacks.
 In the case when $X$ is a quasi-compact semi-separated scheme, $X_{\bt}$ can just be constructed by taking the \v Cech nerve of an affine cover, in which case $\CC^{\bt}$ is just a \v Cech complex.

In our main applications in \S \ref{redobsn}, $X$ will be of the form $Y\ten_RA$, for $A \in s\CAlg_R$ and  $Y$ a smooth quasi-compact semi-separated scheme over a Noetherian ring $R$. For such applications, we can regard $X$ as being the derived scheme associated to a dg scheme (or even a dg manifold) in the sense of \cite{Quot} (for instance by the construction of \cite[\S 6.4]{stacks2}). Then $\Perf(X)$ corresponds to the space of compact objects in the dg derived category of $A$-modules in complexes of quasi-coherent sheaves on $Y$, via the equivalences summarised in \cite[Remark 5.32]{2021lect}.
\end{remarks}

\subsubsection{Derived de Rham cohomology}

We begin with  very general definitions; readers interested in nothing more exotic than a derived Artin $n$-stack can jump straight to Lemma \ref{DRresnlemma} and take it as a definition. 

\begin{definition}
 Given a derived $\infty$-stack $X$ over $R$, we  write $F^p\oL\DR(X/R):=\oR\Gamma(X, F^p\oL\DR(\sO/R))$, where $\sO_X$ is the \'etale  hypersheaf  $\oR\Spec A \mapsto A$ on the site of derived affine $R$-schemes over $X$. Equivalently, $F^p\oL\DR(X/R)$ is the homotopy end $\int^h_{A \in s\CAlg_R} F^p\oL\DR(A/R)^{X(A)}$.
 \end{definition}

  For such $X$ and $A \in s\CAlg_R$, we then write $X_A$ for the derived $\infty$-stack $X$ over $A$ given by base change (i.e. composition with the forgetful functor $s\CAlg_A \to s\CAlg_R$). 
 
 More generally, given a morphism $X \to Y$ of derived $\infty$-stacks, we can  define the \'etale hypersheaf  $X_{\sO_Y}$ (taking values in derived $\infty$-stacks over $\sO_Y$) on the site of derived affine $R$-schemes $V \simeq \oR\Spec A$ over $Y$ by letting $X_{\sO_Y}(V)$ be the derived $\infty$-stack over $A$ sending $B$ to $\oR\Map_Y(\oR \Spec B, X)$
 
 \begin{definition}
 Given a morphism $X \to Y$ of derived $\infty$-stacks over $R$, set $F^p\oL\DR(X/Y):= \oR\Gamma(Y, F^p\oL\DR(X_{\sO_Y}/\sO_Y))$, the $\oR\Gamma(Y, \sO_Y)$-module of derived global sections of the $\sO_Y$-module $F^p\oL\DR(X_{\sO_Y}/\sO_Y)$ in unbounded complexes. 
 
 Equivalently,  for the functors $F^p\oL\DR(-/-)$ and $(X/Y)\co (A \to B) \mapsto X(B)\by^h_{Y(B)}Y(A)$ on  the arrow category $s\CAlg_{R}^{[1]}$, we can interpret $F^p\oL\DR(X/Y)$ as derived global sections of $(X/Y)$ with values in $F^p\oL\DR$, i.e. as the homotopy end
$
 \int^h_{(A \to B)} F^p\oL\DR(B/A)^{(X/Y)(A \to B)} 
$.
 \end{definition}

For morphisms $X \to Y$ of strongly quasi-compact derived Artin $n$-stacks, the following lemma allows us to express $F^p\oL\DR(X/Y)$ simply in terms of the algebraic complexes from Definition \ref{affLDRdef}, since double application of  \cite[Theorem 4.7]{stacks2} gives resolutions by derived affine schemes of the required form. When $Y$ itself is a derived affine scheme, we can take $\tilde{Y}_{\bt}=Y$ and the condition is then just that $\tilde{X}_{\bt}$ be a hypercover of $X$. 

We could alternatively take this formula as a definition, observing that since hypercovers yield quasi-isomorphisms of such complexes, it defines a functor on the relative category of strongly quasi-compact 
derived  Artin $n$-stacks, by \cite[Theorem 6.11]{2021lect}.

\begin{lemma}\label{DRresnlemma}
 Given a morphism $X \to Y$ of derived $\infty$-stacks over $R$, a simplicial  hypercover $\tilde{Y}_{\bt} \to Y$ for the \'etale topology, and a relative simplicial hypercover $\tilde{X}_{\bt} \to X\by^h_Y\tilde{Y}_{\bt}$, the complex $F^p\oL\DR(X/Y)$ is quasi-isomorphic to the product total complex of the double complex $n \mapsto F^p\oL\DR(\tilde{X}_n/\tilde{Y}_n)$.
 \end{lemma}
\begin{proof}
Given a morphism from $(A \to B)$ to $(A' \to B')$ in  $s\CAlg_{R}^{[1]}$ with $A \to A'$ and $B \to B'$ \'etale, we have a natural quasi-isomorphism $\oL\Omega^p(B/A)\ten_B^{\oL}B' \to \oL\Omega^p(B/A)$. Thus the functor  $\oL\Omega^p$ from the arrow category $s\CAlg_{R}^{[1]}$ to the category of $R$-linear unbounded complexes  satisfies  hyperdescent with respect to the topology $\tau$ generated by finite families $\{(A \to B) \to (A'(i) \to B'(i))\}_i$ for which the maps $A \to A'(i)$ and $B \to B'(i)$ are \'etale and $\coprod_i \Spec \pi_0 B'(i) \to \Spec \pi_0B$ is surjective. 
By taking homotopy limits of finite extensions, it follows that the functor  $\oL\DR$ on the arrow category also satisfies $\tau$-hyperdescent. 

The question thus reduces to showing that the simplicial diagram $n \mapsto (\tilde{X}_n/\tilde{Y}_n)$ is a $\tau$-hypercover of $(X/Y)$. This in turn reduces to showing that if $Y' \to Y$ and $X' \to X\by_YY'$ are \'etale local epimorphisms, then $(X'/Y') \to (X/Y)$ is a $\tau$-local epimorphism, but this follows easily from the factorisation
\begin{align*}
  X'(B)\by^h_{Y'(B)}Y'(A) \to &(X\by^h_YY')(B)\by^h_{Y'(B)}Y'(A)\\
  &\simeq X(B)\by^h_{Y(B)}Y'(A) \to X(B)\by^h_{Y(B)}Y(A). \qedhere
\end{align*}
\end{proof}

\begin{definition} 
Given an Artin $n$-stack $X$ locally of finite presentation over $\Cx$, we can form a resolution $\tilde{X}_{\bt}$ of $X$ by l.f.p.  $\Cx$-schemes as in \cite[Theorem 4.7]{stacks2}, giving a simplicial topological space $\tilde{X}_{\bt}(\Cx)_{\an}$ on taking the analytic topology. 
Since $\tilde{X}_{\bt}$ is unique up to smooth hypercovers and since hypercovers induce equivalences on categories of local systems and their cohomology, the homotopy type of the  homotopy colimit is independent of choices made. It can be realised by the
fat geometric realisation $|\tilde{X}_{\bt}(\Cx)_{\an}|$, and we denote it simply by $|X(\Cx)_{\an}|$. When $X$ is  an algebraic space, note that this just recovers $X(\Cx)_{\an}$. 

Given a derived Artin $n$-stack $X$ over $\Cx$ whose underived truncation $\pi^0X$ is locally of finite type, we write $|X(\Cx)_{\an}|:= |(\pi^0X)(\Cx)_{\an}|$; the notation is justified because $X$ and $\pi^0X$ have the same $\Cx$-points.
\end{definition}

When working over Artinian simplicial $\Cx$-algebras (i.e.\ simplicial $\Cx$-algebras which are levelwise Artinian  with bounded Dold--Kan normalisation), we  can  compare derived de Rham cohomology with Betti cohomology of this analytic space of $\Cx$-points: 
\begin{lemma}\label{horblochlemma}
 For $X$ a derived Artin $n$-stack  over a local Artinian simplicial $\Cx$-algebra $A$, 
 with underived truncation locally of finite type, there is a canonical zigzag of quasi-isomorphisms $\oL\DR(X/A) \simeq \oR\Gamma(|X(\Cx)_{\an}|,A)$. 
\end{lemma}
\begin{proof}
Given  a  cofibrant simplicial $A$-algebra  $B$ with $\pi_0B$ finitely generated, we can write $B$ as the filtered colimit of its levelwise finitely generated  cofibrant $A$-subalgebras $B'$ with $\pi_0B'\cong \pi_0B$.
We then have zigzags 
\[
\oR\Gamma((\Spec \pi_0B)_{\an}, A_n) \to \oR\Gamma(\widehat{(\Spec B'_n)}_{\an}, \Omega^{\bt,\an}_{\sO/A_n})  \la \hat{\Omega}^{\bt}_{B_n'/A_n},
\]
where completions are with respect to the map $B_n' \to \pi_0B$, with $\widehat{(\Spec B_n')}_{\an}$ being   the formal Stein space of  \cite[\S I.6]{HartshorneDRcoho} and $\Omega^{\bt,\an}$  the analytic de Rham cohomology complex; note that $\hat{\Omega}^{\bt}_{B_n'/A_n}$ is Hartshorne's algebraic de Rham cohomology of $\pi_0B$ over $A_n$.
Since everything in sight is flat over the  local Artinian ring $A_n$, to see that these maps are quasi-isomorphisms it suffices to know that they are so after  base change along $A_n \to \Cx$, which they are by  \cite[Theorem IV.1.1]{HartshorneDRcoho}. 

Now consider the zigzag
$\DR(B/A) \la \LLim_{B'} \DR(B'/A) \to \LLim_{B'}\widehat{\DR(B'/A)}   $,
where we write  $ \widehat{\DR(B'/A)}:= \Tot^{\Pi}\hat{\Omega}^{\bt}_{B_{\bt}'/A_{\bt}}$.
Again, these maps are all quasi-isomorphisms, because after base change along $A \to \Cx$ they are all quasi-isomorphic to $\HP^{\Cx}(\pi_0B\ten_{\pi_0A}\Cx)$: the first two cases follow from Proposition \ref{cfDRHC} combined with Theorem \ref{etalekey} (replacing $\Q$ with $\Cx$ in Goodwillie's proof), and the third because  \cite[Theorem 2.2]{emmanouil} (following \cite{FeiginTsygan} and Goodwillie's theorem) gives $\hat{\Omega}^{\bt}_{B_n'/A_n}\ten_{A_n}\Cx \simeq \HP^{\Cx}(\pi_0B\ten_{\pi_0A}\Cx)$ for all $n$ and $B'$.

Putting everything together and taking filtered colimits, we have a canonical zigzag $\DR(B/A) \simeq  \oR\Gamma((\Spec \pi_0B)_{\an}, A)$. Applying this locally to a cofibrant simplicial $A$-algebra resolution $\tilde{\sO}_X$ for $\sO_X$ and taking derived global sections then  gives us the required zigzag $ \oL\DR(X/A) \simeq \oR\Gamma(|X(\Cx)_{\an}|,A).$
\end{proof}

\subsection{Generalised Abel--Jacobi maps}

\begin{definition}\label{JXpglobaldef}
Given a morphism $X \xra{f} Y$ of 
derived $\infty$-stacks
over a simplicial $\Q$-algebra $k$, define
\[
 J^p(X/Y,k):= \cocone\left( \oL\DR(X/k) \to \oL\DR(X/Y) /F^p\right)[-2p].
\]
\end{definition}
Note that  when $X$ is affine, Proposition \ref{cfDRHC} gives an equivalence $J^p(X/R,\Q) \simeq J^p(X/R)$ with the construction of  Definition \ref{JXdef}.



\begin{example}\label{deligneex} 
 If $C \to A$ is a morphism of Artinian local $\Cx$-algebras and $X$ a smooth proper scheme over $A$, then the complexes $J^p(X/A,C)$ admit an underived  analytic description as follows. 

Applying GAGA to graded pieces as in \cite[Proposition 3.8]{blochSemiregularity} gives a quasi-isomorphism $\oR\Gamma(X, \Omega^{\bt}_{X/A}/F^p) \to \oR\Gamma(X(\Cx)_{\an}, \Omega^{\bt}_{X_{\an}/A}/F^p)$, and Lemma \ref{horblochlemma} gives a compatible quasi-isomorphism $\oR\Gamma(X, \oL\DR(\sO_X/C)) \simeq \oR\Gamma(X(\Cx)_{\an},C)$. Combining these, we have
\[
 J^p(X/A,C)  \simeq \oR\Gamma(X(\Cx)_{\an}, C \to \sO_{X/A}^{\an} \xra{d} \Omega^{1,\an}_{X/A}  \xra{d} \ldots \xra{d} \Omega^{p-1,\an}_{X/A})[-2p], 
\]
giving our space an interpretation as a form of Deligne cohomology.
 \end{example}


\begin{definition} \label{JXdef2}
 Given 
a simplicial commutative $\Q$-algebra $R$ and a 
derived $\infty$-stack $X$
over $R$, define $J^p_X$ to be the functor on $s\CAlg_R$ given by $J^p_X(A):= J^p(X_A/A,\Q)$.

Note that Proposition \ref{cfDRHC} ensures this is equivalent to Definition \ref{JXdef} when $X$ is a derived affine scheme represented by a simplicial $R$-algebra $O(X)$ chosen to be levelwise flat. 
\end{definition}

 Since Lemma \ref{Jtgtlemma} and Proposition \ref{cfDRHC} give a canonical zigzag of quasi-isomorphisms between the corresponding affine constructions, passing to homotopy limits gives:
\begin{lemma}\label{Jtgtlemma2}
 For $f \in \H_0(J_X^p(A))$, the tangent space $T_f( \tau_{\ge 0}(J_X^p,M))$ is canonically quasi-isomorphic to $\tau_{\ge 0}(((\oL\DR(X/R)/F^p)\ten^{\oL}_RM)[2p-1] )$. 
\end{lemma}

\begin{remark}
 Although not needed for our applications, if we replace $\Q$ with any simplicial $\Q$-algebra $k$ throughout Definition \ref{JXdef2}, the description of the tangent space in Lemma \ref{Jtgtlemma2} remains valid.
\end{remark}

\begin{definition}\label{generalAJdef} 
Globalising definition \ref{AJdef}, define the Abel--Jacobi map 
\[
 \Xi_{k} \co \Perf(X) \to \tau_{\ge 0}\prod_{p \ge 0} J^p(X/Y,k)
\]
(which we simply denote as $\Xi$ when $k=\Q$) as follows. Via the equivalences of Proposition \ref{cfDRHC}, the Goodwillie--Jones Chern character gives us maps 
\[
\ch_p^- \co \Perf(\sO_{X}) \to \tau_{\ge 0} (F^p\oL\DR(X/k)[-2p]).
\]
We then take derived  global sections $\oR\Gamma(X,-)$ and   compose  with the natural maps
\[
 \oR\Gamma(X,\tau_{\ge 0} (F^p\oL\DR(X/k)[-2p])  ) \to \tau_{\ge 0}(\oR\Gamma(X, F^p\oL\DR(X/k) )[-2p]) \to \tau_{\ge 0}J^p(X/Y,k). 
\]
 \end{definition}

\begin{definition}\label{tgtAJdef}
For $\sF \in \Perf_{X}(A)$ and a simplicial $A$-module $M$,  write
\[
\xi^i\co \EExt^{i+1}_{\sO_{X_A}}(\sF,\sF\ten^{\oL}_A M) \to \prod_{p\ge 0} \H^{2p+i-1}((\oL\DR(X/R)/F^p)\ten^{\oL}_RM) 
\]
 for the tangent map
\[
 \DD^i_{\sF}(\Xi, M)\co \DD^i_{\sF}(\Perf_{X},M) \to \prod_{p\ge 0} \DD^i_{\Xi_p(\sF)}(\tau_{\ge 0}J_{X}^p,M).
\]
\end{definition}

Substituting our hypersheaves in Proposition \ref{AJtgtprop} and taking derived global sections yields: 

\begin{proposition}\label{AJtgtprop2}
The tangent map $\xi_p^i$ is given by composing the Lefschetz map $\cL_{X/R}\co \EExt^{i+1}_{\O_{X_A}
}(\sF,\sF\ten^{\oL}_A M) \to \bH^{p+i}(X, \oL\Omega^{p-1}_{X/R}\ten^{\oL}_RM)$ with the canonical map
$\mathsf{I} \co \bH^{p+i}(X, \oL\Omega^{p-1}_{X/R}\ten^{\oL}_RM)\to \bH^{2p+i-1}(X,(\oL\DR(\O_{X}/R)/F^p)\ten^{\oL}_RM) $.
\end{proposition}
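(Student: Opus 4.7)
The plan is to reduce to the affine statement of Proposition \ref{AJtgtprop} via étale hyperdescent, then translate through the Hochschild--Kostant--Rosenberg identifications of Proposition \ref{cfDRHC}. First I would note that both $\Perf_{\fX}$ and $\sJ_{\fX}^p(-,k)$ are by construction étale hypersheaves on $\fX$, so it suffices to prove the corresponding statement on each derived affine $X_n$ in a resolution $X_{\bt}$ of $\fX$ (as in Remarks \ref{hyperdescent}) and then apply $\oR\Gamma(\fX,-) = \oR\Tot\CC^{\bt}(X_{\bt},-)$. Because tangent spaces are defined as homotopy fibres of maps of hypersheaves, they commute with both $\oR\Gamma$ and with the truncation $\tau_{\ge 0}$ in the relevant range, so the tangent map of $\Xi$ in Definition \ref{generalAJdef} is simply $\oR\Gamma$ applied to the sheafified tangent map of the affine Abel--Jacobi map.

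Next, on each affine piece, Proposition \ref{AJtgtprop} identifies the tangent map of $\Xi_p$ with the composition
\[
\oR\HHom_{\O_{\fX}\ten_R^{\oL}A}(\sE,\sE\ten^{\oL}_AM) \xra{\cL_{\O_{\fX}/R}} \HH^R(\O_{\fX})\ten^{\oL}_RM \xra{I} \HC^R(\O_{\fX})^{(p-1)}[2p-2]\ten^{\oL}_RM,
\]
after projecting onto the Hodge weight $p-1$ factor. Here I would be careful that the Lefschetz map $\cL_{\fX/R}$ of Definition \ref{lefschetzdef} is defined locally via the evaluation pairing and hence sheafifies with no extra work, and that the natural map $I\co \HH \to \HC$ (and the SBI sequence) is compatible with the Hodge decomposition.

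Now I would invoke Proposition \ref{cfDRHC} to translate everything into de Rham language. Under the quasi-isomorphisms $\HH^R(\O_{\fX})^{(p-1)}\simeq \oL\Omega^{p-1}_{\fX/R}[-(p-1)]$ and $\HC^R(\O_{\fX})^{(p-1)}\simeq (\oL\DR(\O_{\fX}/R)/F^p)[-2(p-1)]$, the map $I$ becomes exactly the canonical map $\oL\Omega^{p-1}_{\fX/R}[-(p-1)] \to (\oL\DR(\O_{\fX}/R)/F^p)[-2(p-1)]$ appearing in the short exact sequence $0\to\oL\Omega^{p-1}[-(p-1)]\to\oL\DR/F^p\to\oL\DR/F^{p-1}\to 0$ of Proposition \ref{cfDRHC}. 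Combined with the degree shift $[2p+i-1]$ built into Definition \ref{tgtAJdef} and the passage to hypercohomology via $\oR\Gamma(\fX,-)$, this gives precisely the composition $I\circ \cL_{\fX/R}$ described in the statement.

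The main obstacle is bookkeeping: keeping track of the Hodge weight projections and the degree shifts $[-2p]$, $[-p]$ occurring in Proposition \ref{cfDRHC}, and verifying that the sheafified Lefschetz map matches the Lefschetz map for $\fX$ in Definition \ref{lefschetzdef} rather than merely a local version of it. The latter point reduces to checking that the evaluation pairing $\sE\otimes^{\oL}\oR\hom(\sE,\O) \to \O$ used to define $\cL$ is constructed via étale-local data, which is immediate since $\sE$ is perfect and $\oR\hom$ satisfies étale descent. Once this compatibility is in place, the statement follows directly from Proposition \ref{AJtgtprop} after sheafification and applying $\oR\Gamma$.
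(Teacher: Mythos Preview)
Your proposal is correct and follows exactly the approach the paper takes: the paper's entire argument is the single phrase ``Hypersheafifying Proposition \ref{AJtgtprop} yields'', and you have simply unpacked what that entails (\'etale hyperdescent for both sides, the local identification from Proposition \ref{AJtgtprop}, and the HKR translation of Proposition \ref{cfDRHC}). The extra care you take with the degree shifts and with checking that the Lefschetz map sheafifies is appropriate but not something the paper spells out.
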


\begin{remark}\label{cfBF}
The construction of the Atiyah--Hochschild character $\AH(\sF)$ of \cite[\S 5]{BuchweitzFlennerGlobal}  just makes it the dual of the Lefschetz map, in the sense that $\cL(\alpha) = \tr( \AH(\sF)\circ \alpha)$. Thus \cite[Theorem 5.1.3 and Proposition 6.2.1]{BuchweitzFlennerGlobal} ensure that $\cL$ is the same as the semiregularity map $\sigma$ of \cite{BuchweitzFlenner}, given by applying the exponential of the Atiyah class then taking the trace.

Thus the semiregularity map $\sigma$ is induced by our Abel--Jacobi map $\Xi_{p,\Cx}$, which Example \ref{deligneex} interprets
as a  Chern character taking values in a form of Deligne cohomology, exactly as anticipated in \cite[\S 1]{BuchweitzFlenner}.
\end{remark}

Combining Lemma \ref{obs} with Proposition \ref{AJtgtprop2} and Remark \ref{cfBF}, or just substituting in Corollary \ref{AJtgtcor} and taking derived global sections, gives the following:
\begin{corollary}\label{AJtgtcor2}
Given a square-zero extension $I \to A \xra{e} B $ in   $ s\CAlg_R $ and an object  $\sF \in \Perf_X(B)$, the space $\Perf_X(A)_{\sF}$ of deformations of $\sF$ over $A$ fits into a natural
commutative diagram  
\[
\begin{CD}
 \Perf_X(A)_{\sF} @>>> \{\sF\} @>{o_e}>> \tau_{\ge 0}(\oR\HHom_{\sO_{X_B}}(\sF,\sF\ten_B^{\oL}I)[-2])\\
 @V{\Xi_p}VV @V{\Xi_p}VV @V{\cL_{p-1}}V{=\sigma_{p-1}}V \\
 J_X^p(A) @>>> J_X^p(B) @>{o_e}>> \tau_{\ge 0}\oR\Gamma(X,(\oL\DR(\O_{X}/R)/F^p)\ten^{\oL}_RI[-2p])
 \end{CD}
\]
of homotopy fibre sequences, for the Lefschetz map $\cL$ of \cite{BresslerNestTsygan}
and semiregularity map  $\sigma$ of \cite{BuchweitzFlenner}. 

Thus the image of $\cL_{p-1}(o_e(\sF))$ in $\bH^{2p}( X,(\oL\DR(\O_{X}/R)/F^p)\ten^{\oL}_RI))$ is the obstruction to lifting $\Xi(\sF)$ from $\H_0J_X(B)$ to $\H_0J_X(A)$.  
\end{corollary}

\begin{remark}\label{splitrmk}
In particular,   if $A \to B$ admits a section in the derived category of $R$-modules (automatic if $R$ is a field and $\pi_*A\onto \pi_*B$), 
then $J_{X}^p(A)\simeq J_{X}^p(B) \oplus (\oL\DR({X}/R)/F^p)\ten_RI$, so 
 $\H_0J_X(A)\to \H_0J_X(B)$ is surjective and  Corollary \ref{AJtgtcor2}
implies that 
\[
o_e(\sF) \in \ker(\cL_{p-1} \co \Ext^2_{\sO_{X_A}}(\sF,\sF\ten^{\oL}_AI) \to  \bH^{2p}( X,(\oL\DR(\O_{X}/R)/F^p)\ten^{\oL}_RI)).
\]
\end{remark}

%
%

\subsection{Horizontal sections}\label{horsn}  


We now introduce variants of the obstruction $\cL_{p-1}(o(\sF))$ which are potentially weaker but tend to be more tractable. 
Our most generally applicable result is the following,
writing $\oL\Omega^j(X/S):=\oR\Gamma(S, \oR\Gamma(X_{\sO_S},\oL\Omega^j_{X_{\sO_S}/\sO_S}))$ and $\oL\sI\Omega^j(X/S):=\oR\Gamma(S, \oR\Gamma(X_{\sO_S},\sI\ten^{\oL}_{\sO_S}\oL\Omega^j_{X_{\sO_S}/\sO_S}))$: 
\begin{corollary}\label{gensemiregcor} 
Take a  morphism $f \co X \to S$ 
of derived $\infty$-stacks over $\Q$ and  a closed immersion $e \co S' \into S$ \cite[Definition 2.2.3.5]{hag2}
defined by an ideal $\sI$ with $\pi_0\sI$  nilpotent in $\sO_{\pi^0S}=\pi_0\sO_S$. 

Then for $X':=X\by_{S}^hS'$, the Goodwillie--Jones Chern character $\ch_p^-$ for $\sO_{X'}$ over $\sO_S$ 
induces a  form of Abel--Jacobi map $\Xi_{p,S}$ from  $K_0(X')$ to 
the cohomology group
\[
 \H^{2p}\Tot^{\Pi}(\oL \sI\Omega^0 \xra{d} \oL \sI\Omega^1 \xra{d} \ldots \xra{d} \oL \sI\Omega^{p-1} \xra{d} \oL\Omega^p     \xra{d} \oL\Omega^{p+1}  \xra{d}\ldots )(X/S).
\]

If $\sI$ is square-zero, then
for  any perfect complex $\sF$ over  $X'$
the image of $\Xi_{p,S}(\sF)$ in $\bH^{2p}\oL \sI\Omega^{< p}(X/S)$ 
 is  given by applying the composite  map 
 \[
  \EExt^{2}_{\sO_{X'}}(\sF,\sF\ten_{\sO_{S'}}^{\oL} \sI) \xra[=\sigma_{p-1}]{\cL_{p-1} } \H^{p+1}\oL\sI\Omega^{p-1}(X/S) \to \bH^{2p}\oL\sI\Omega^{<p}(X/S)
 \]
 to  the obstruction 
 $o_e(\sF)$
 to deforming $\sF$ to an $\sO_X$-module in complexes. 
\end{corollary}
\begin{proof}
The map $\Xi_{p,S}$ is defined as the composite of $\Xi_p$ with the natural map
\begin{align*}
 J^p(X'/S',\Q) &=\cocone( \oL\DR(X'/\Q) \to \oL\DR(X'/S') /F^p)[-2p]\\ 
 &\xla{\sim} \cocone( \oL\DR(X/\Q) \to \oL\DR(X'/S') /F^p)[-2p]\\
 &\to \cocone( \oL\DR(X/S) \to \oL\DR(X'/S') /F^p)[-2p] =: J^p(X'/S',S),
\end{align*}
where the equivalence of the second line follows from Theorem \ref{etalekey} because $\pi^0X' \to \pi^0X$ is a nilpotent thickening.
Since $ \oL\sI\Omega^j(X/S) \simeq \cocone(\oL\Omega^j(X/S) \to \oL\Omega^j(X'/S'))$, this gives the target in the required form.


When $\sI$ is square-zero, substituting in Corollary \ref{AJtgtcor2} and taking derived global sections over $S$ then gives us the first two rows of the  
commutative diagram  
\[
\begin{CD}
 \Perf(X)_{\sF} @>>> \{\sF\} @>{o_e}>> \oR\HHom_{\sO_{X}}(\sF,\sF\ten_{f^{-1}\sO_{S}}^{\oL}f^{-1}\sI)[-2]\\
 @V{\Xi_p}VV @V{\Xi_p}VV @V{\cL_{p-1}}V{=\sigma_{p-1}}V \\
 J^p(X/S,\Q) @>>> J^p(X'/S',\Q) @>{o_e}>> 
 \oL \sI\Omega^{< p}(X/S)[-2p]\\
 @VVV @VVV @| \\
 F^p\oL\DR(X/S)[-2p] @>>> J^p(X'/S',S) @>{o_e}>> \oL \sI\Omega^{< p}(X/S)[-2p]
\end{CD}
\]
of homotopy fibre sequences, where we have omitted $\tau_{\ge 0}$ from all complexes to lighten the notation; the third sequence follows  from the  simple calculation that $\cone( F^p\oL\DR(X/S) \to J^p(X'/S',S)[2p]) \simeq \oL \sI\Omega^{< p}(X/S)$. This diagram yields the required description since the first two composite vertical maps are $\ch_p^-$ and $\Xi_{p,S}$.
\end{proof}

Corollary \ref{gensemiregcor} relates the semiregularity map to the Goodwillie--Jones Chern character. Over $\Cx$, we now give a slightly weaker, but more accessible, statement in terms of the topological Chern character.  

When applied to the case where $X$ is a smooth proper scheme over a local Artinian $\Cx$-algebra $A$,  the following  corollary  establishes the conjectures of \cite{BuchweitzFlenner} and hence those of \cite{blochSemiregularity}.  By analogy with   \cite[3.9]{blochSemiregularity}, we can regard it as saying that the semiregularity map constrains deformations of the Chern character $\ch_p(\sF)$  as a horizontal section in $F^p$.  

Writing $F^p\H^{*}(\oL\DR(X/A)) :=\im\left(\H^{*}(F^p\oL\DR(X/A))\to \H^{*}(\oL\DR(X/A)) \right)$, we have:
\begin{corollary}\label{horizobscor2} 
 Take a  local Artinian simplicial $\Cx$-algebra $A$, a derived Artin $n$-stack $X$  over $A$ whose underived truncation is locally  of finite type, 
 and a square-zero simplicial ideal  $I \subset  A$ with quotient $e \co A \to B$. 
 
 Then for  any perfect complex $\sF$ over  $X':=X\ten_A^{\oL}B$,
 with $o_e(\sF) \in \EExt^{2}_{\sO_{X'}}(\sF,\sF\ten_B I)$ the obstruction to deforming $\sF$ to an $\sO_X$-module in complexes,
 the 
 image of the 
 Chern character $\ch_p(\sF)$ under the  map 
\begin{align*}
 \H^{2p}(|X'(\Cx)_{\an}|,\Q) \cong \H^{2p}(|X(\Cx)_{\an}|,\Q) 
 \to 
 \H^{2p}(|X(\Cx)_{\an}|,A) 
 \cong \H^{2p}(\oL\DR(X/A))
\end{align*}
 lies in $F^p\H^{2p}(\oL\DR(X/A))$ if and only if the image of $\cL_{p-1}(o_e(\sF))$ under the map  $\bH^{p+1}(X,\oL\Omega^{p-1}_{X/A}\ten_A^{\oL}I) \to \bH^{2p}(X,\Tot^{\Pi}(\oL\Omega^{<p}_{X/A},d))$, coming from the inclusion $I \into A$, is zero.
 \end{corollary}
 \begin{proof}
The obstruction $\kappa$ is given by the image of 
$\ch_p(\sF)$ 
in $ \H^{2p}(\oL\DR(X/A)/F^p)$, which 
 Lemma \ref{horblochlemma} tells us is  the image of the  Chern character $\ch_p^{\dR}(\sF)
 \in  \H^{2p}(\oL\DR(X'/\Cx)) \cong \H^{2p}(\oL\DR(X/\Cx))$ under the morphism  
 \[
  \H^{2p}(\oL\DR(X/\Cx)) \to \H^{2p}(\oL\DR(X/A)/F^p).
  \]

  By compatibility of the Goodwillie--Jones Chern character with more classical Chern characters 
  as in \cite{HoodJones} and references therein,  
$\ch_p^{\dR}(\sF)$ is  the image of $\Xi_{p}(\sF)$ under the natural map $J^p(X'/B) \to \oL\DR(X'/\Cx)[-2p]$. It thus follows that $\kappa$ is the image of $\Xi_{p,A}(\sF)$ 
  under the composite map
  \begin{align*}
   \H_0J^p(X'/B,A)= ~&\H^{2p}\cone\left(\oL\DR(X/A) \to \oL\DR(X'/B)/F^p\right) \\
   &\to \H^{2p}\oL\DR(X/A) \to \H^{2p}(\oL\DR(X/A)/F^p), 
  \end{align*}
which is the same as the composite map
 \begin{align*}
  \H_0J^p(X'/B,A) \xra{o_e} &~\H^{2p}\cone\left(\oL\DR(X/A)/F^p \to \oL\DR(X'/B)/F^p\right)\\
 &\cong \bH^{2p}(X,\oL\Omega^{<p}_{X/A}\ten_A^{\oL}I) \to \bH^{2p}(X,\oL\Omega^{<p}_{X/A} ),
 \end{align*}
so the description of $o_e(\Xi_{p,A}(\sF))$ from Corollary \ref{gensemiregcor} completes the proof.
 \end{proof}
 
 \begin{remark}[$\mu$-twisted sheaves]\label{mutwistrmk}
 Because the group scheme $\mu_r$ of $r$th roots of unity is \'etale over $\Q$ and $\mu_r$ is reductive,  $\H^*(B\mu_r, \oL\Omega^p_{B\mu_r/\Q})$ vanishes for $p>0$ and is $\Q$ for $p=0$, so the K\"unneth isomorphism applied to the \v Cech nerve of $\tilde{X}_{\alpha} \to X$ gives $\H^*(\tilde{X}_{\alpha}, \oL\Omega^p
 )  \cong  \H^*(X, \oL\Omega^p
 )$ for all $\mu_r$-gerbes $\tilde{X}_{\alpha}$ over $X$ (i.e. homotopy fibres of maps $\alpha \co X \to B^2\mu_r$). Since $\mu_r$-twisted perfect complexes on $X$ (i.e. maps $X \to [\Perf/B\mu_r]$) give rise to perfect complexes on such gerbes, replacing $X$ with $\tilde{X}_{\alpha}$ in  Corollaries \ref{gensemiregcor} and \ref{horizobscor2}   immediately extends their conclusions to $\mu_r$-twisted perfect complexes $\sF$ on  $X'$ with twist $[\alpha] \in \H^2_{\et}(X', \mu_r) \cong \H^2_{\et}(X, \mu_r)$.
 \end{remark}

\subsection{Reduced obstructions}\label{redobsn}
\begin{remark}\label{horproperrk} 
 Assume that $R$ is a  Noetherian $\Q$-algebra, with $X$ a smooth proper scheme over $\Spec R$. Then the Lefschetz principle and degeneration of the Hodge--de Rham spectral sequence \cite[\S 5--6]{deligneLefschetz} imply there exists an $R$-linear quasi-isomorphism
 $
 \DR(X/R)/F^{p}\simeq  \bigoplus_{i=0}^{p-1} \oR\Gamma(X, \Omega^i_{X/R})[i],
 $
and in particular that the morphism $ \oR\Gamma(X, \Omega^{p-1}_{X/R})[p-1] \to \DR(X/R)/F^p$ admits an $R$-linear retraction.

For any simplicial ideal $I$ in a simplicial $R$-algebra $A$ for which   $\pi_*I \to \pi_*A$ is injective, the map 
\[
 \bH^{p+1}(X,\Omega^{p-1}_{X/R}\ten_R I)\to \bH^{2p}(X, (\DR(\O_X/R)/F^p)\ten_RA)
\]
 is therefore injective. In particular, this means that the element $\cL_{p-1}(o_e(\sF)) $ in $\bH^{p+1}(X,\Omega^{p-1}_{X/R}\ten_R I)$ will vanish provided 
its image in $\bH^{2p}(X, \DR(\O_{X_A}/A)/F^p)$ does so.
 
 In the case where $X$ is a smooth proper scheme over an Artinian $\Cx$-algebra, vanishing of $\cL_{p-1}(o_e(\sF))$ itself is thus equivalent to the conditions of  Corollary \ref{horizobscor2} (taking $R=A$), which we can paraphrase as saying that  $\cL_{p-1}(o_e(\sF))$ is the obstruction to 
the unique horizontal lift of $\ch_p^{\dR}(\sF)$ still lying in $F^p\H^{2p}\DR(X/A)$, or equivalently remaining of pure Hodge type $(p,p)$. 

Taking an open substack $\fM \subset \Perf_X$ for which 
that obstruction
vanishes at all points $[\sF]$
 (for instance by restricting to the Hodge locus of \cite{voisinHodgeLociAbs}), 
 we then have a
 functorial obstruction theory 
 \[
\left([\sF] \in \fM(B)\right)
\mapsto 
\ker\left(\cL_{p-1}\co \EExt^{2}_{\O_{X_B}}(\sF,\sF\ten_B^{\oL}-) \to  \H^{p+1}(X,\Omega^{p-1}_{X/R})\ten_R^{\oL}-\right)
\]
 for $\fM$
 as  a subspace of the standard obstruction theory.

 Such reduced obstruction theories for pairs $(X,\sF)$ are   required in the study of Pandharipande--Thomas invariants.
\end{remark}

\begin{remark}\label{GWrk}
 There is a morphism from the derived moduli stack 
 of proper schemes over $X$ (\cite[Theorem \ref{dmsch-representdaffine}]{dmsch}) to the derived stack $\Perf_X$, given by sending $(f\co Z\to X_B)$ to $\oR f_*\O_Z$.  Since the obstruction maps of Lemma \ref{obs} are functorial, this gives rise to a morphism
\[
\psi\co \EExt^2_{\sO_Z}(\bL^{Z/X_B}, \O_Z\ten_B^{\oL} -) \to \Ext^2_{\sO_{X_B}}(\oR f_*\O_Z,\oR f_*\O_Z\ten_B^{\oL} -) 
\]
of obstruction theories. 

For $X$ smooth and proper over $R$, Remark \ref{horproperrk} then implies that $\cL_{p-1} \circ \psi$ annihilates  obstructions to deforming $Z$ over $X$ provided the unique horizontal lift of  $\ch_p(\oR f_*\sO_Z)$ remains of Hodge type. For any open substack $\fN$ of the moduli stack 
 of proper schemes over $X$  which parametrises schemes  satisfying that condition,
%
%
this gives rise to a reduced global obstruction theory, sending $[Z \to X_B] \in \fN(B)$ to 
\[
\ker\left(\cL_{p-1}\circ \psi \co \Ext^2_{\sO_Z}(\bL^{Z/X_B}, \O_Z\ten_B^{\oL} - ) \to  \H^{p+1}(X,\Omega^{p-1}_{X/R})\ten_R^{\oL} -\right);
\]
in particular this applies to stable curves $Z$ over $X$, as  required in the study of Gromov--Witten invariants (see for instance \cite[\S 2.2]{KoolThomas1}). 
\end{remark}

\begin{remark}\label{Blochrk}
 The proof of Proposition \ref{AJtgtprop} characterises the Lefschetz map $\cL$ as a deformation of the Dennis trace.
This means that for any proper  LCI morphism $f\co Z \to X_B$, the Grothendieck--Riemann--Roch theorem 
allows us to interpret the semiregularity map $\cL \circ \psi$ of Remark \ref{GWrk}  as the deformation of the Todd class $f_*(\Td(T_f))$ as $f$ varies.

 When $Z \subset X$ is a codimension $p$ LCI subscheme of a smooth proper scheme, 
\cite[Proposition 8.2]{BuchweitzFlenner} combines with Remark \ref{cfBF} to show that the map
\[
 \cL_{p-1} \circ \psi\co \H^1(Z,\sN_{Z/X}) \to  \H^{p+1}(X,\Omega^{p-1}_{X})
\]
is just Bloch's semiregularity map 
from \cite{blochSemiregularity}, which  is defined in a relatively elementary way in terms of Grothendieck--Verdier duality.  
\end{remark}


\bibliographystyle{alphanum}
\bibliography{references.bib}

\end{document}